\newtheorem{theo}{Theorem}[section]
\newtheorem{prop}[theo]{Proposition}
\newtheorem{lem}[theo]{Lemma}
\newtheorem{letterthm}{Theorem}
\newtheorem{lettercor}[letterthm]{Corollary}
\theoremstyle{definition} 
\newtheorem{defs}[theo]{Definition}
\newtheorem*{defs*}{Definition}
\newtheorem{ex}[theo]{Example}	
\newtheorem{rem}[theo]{Remark}
\newcommand{\R}{\mathbb{R}}
\newcommand{\C}{\mathbb{C}}
\newcommand{\Z}{\mathbb{Z}}
\newcommand{\ovt}{\mathbin{\overline{\otimes}}}
\begin{document}

  \title{Solidity of type III Bernoulli crossed products}

\author{Amine Marrakchi}
\address{\'Ecole Normale Sup\'erieure \\ 45 rue d'Ulm 75230 Paris Cedex 05 \\ France}
\address{Laboratoire de Math\'ematiques d'Orsay \\ Universit\'e Paris-Sud \\ Universit\'e Paris-Saclay \\ 91405 Orsay \\ France}
\email{amine.marrakchi@ens.fr}
\thanks{A. Marrakchi is supported by ERC Starting Grant GAN 637601}
	
	\begin{abstract} We generalize a theorem of Chifan and Ioana by proving that for any, possibly type III, amenable von Neumann algebra $A_0$, any faithful normal state $\varphi_0$ and any discrete group $\Gamma$, the associated Bernoulli crossed product von Neumann algebra $M=(A_0,\varphi_0)^{\ovt \Gamma}\rtimes \Gamma$ is solid relatively to $\mathcal{L}(\Gamma)$. In particular, if $\mathcal{L}(\Gamma)$ is solid then $M$ is solid and if $\Gamma$ is non-amenable and $A_0 \neq \mathbb{C}$ then $M$ is a full prime factor. This gives many new examples of solid or prime type $\mathrm{III}$ factors. Following Chifan and Ioana, we also obtain the first examples of solid non-amenable type $\mathrm{III}$ equivalence relations.
		\end{abstract}
		
	\subjclass[2010]{46L10}
\keywords{Bernoulli, deformation/rigidity, full, prime, solid, spectral gap, Type III factor}	
	
  \maketitle
	
	\bibliographystyle{alpha}	

\section{Introduction}
In \cite{TakaSolid}, Ozawa discovered a remarkable rigidity property of von Neumann algebras that he called \emph{solidity}. One of the many possible definitions is the following one: a von Neumann algebra $M$ is \emph{solid} if for every subalgebra with expectation $Q \subset M$ there exists a sequence of projections $z_n \in \mathcal{Z}(Q)$ with $\sum_n z_n =1$ such that $Qz_0$ is amenable and $Qz_n$ is a non-amenable factor for all $n \geq 1$. The main interest of this notion is that any solid non-amenable factor is automatically \emph{full} (every centralizing sequence is trivial) and \emph{prime} (not a tensor product of two non type $\mathrm{I}$ factors). Ozawa's celebrated result \cite{TakaSolid} states that the group von Neumann algebra $\mathcal{L}(\Gamma)$ of any hyperbolic group $\Gamma$ is solid.

A closely related property in the context of \emph{equivalence relations} was discovered by Chifan and Ioana in \cite{ChifanIoanaBern}. An equivalence relation $\mathcal{R}$ on a probability space $(X,\mu)$ is \emph{solid} (or \emph{solidly ergodic} \cite[Definition 5.4]{GaboriauSurvey}) if for every subequivalence relation $\mathcal{S} \subset \mathcal{R}$ there exists a measurable partition $X=\bigsqcup_n X_n$ by $\mathcal{S}$-invariant subsets such that $\mathcal{S}_{|X_0}$ is amenable and $\mathcal{S}_{|X_n}$ is non-amenable and ergodic for all $n \geq 1$. A solid ergodic non-amenable equivalence relation is automatically \emph{strongly ergodic} (every sequence of almost invariant subsets is trivial) and \emph{prime} (not a product of two non type $\mathrm{I}$ equivalence relations). The main theorem of Chifan and Ioana \cite[Theorem 1]{ChifanIoanaBern} states that the orbital equivalence relation of a Bernoulli action of \emph{any} countable group $\Gamma$ is solid.  In fact, they deduce this solidity result from a stronger theorem \cite[Theorem 2]{ChifanIoanaBern} which essentially says that for any \emph{tracial} amenable von Neumann algebra $(A_0,\tau_0)$ and any discrete group $\Gamma$, the \emph{Bernoulli crossed product von Neumann algebra} $M=(A_0,\tau_0)^{\ovt \Gamma}\rtimes \Gamma$ is \emph{solid relatively to} $\mathcal{L}(\Gamma)$, a notion which we define precisely in section 3. Unlike the method of Ozawa which is based on $C^*$-algebraic techniques and requires the group $\Gamma$ to be exact \cite[Proposition 4.5 and 4.6]{TakaAmenAct}, the approach of Chifan and Ioana is based on Popa's \emph{deformation/rigidity theory} \cite{PopaSurvey} and his \emph{spectral gap rigidity} principle \cite{PopaSpectralGap} so that they do not need to make any assumption on $\Gamma$. Using the same approach, R. Boutonnet was able to generalize their results to \emph{Gaussian actions} \cite{RemiGaussian}. 

Our main theorem generalizes the relative solidity result of Chifan and Ioana to non-tracial, possibly type $\mathrm{III}$, Bernoulli crossed products.

\begin{letterthm} \label{solid}
Let $A_0$ be an amenable von Neumann algebra with separable predual, $\varphi_0$ a faithful normal state on $A_0$ and $\Gamma$ any countable group. Let $M=(A_0,\varphi_0)^{\ovt \Gamma}\rtimes \Gamma$ be the associated Bernoulli crossed product von Neumann algebra. For any subalgebra with expectation $Q \subset 1_QM1_Q$ such that $Q \nprec_M \mathcal{L}(\Gamma)$ (see Section 2 for Popa's intertwining symbol $\prec_M$) there exists a sequence of projections $z_n \in \mathcal{Z}(Q)$ with $\sum_n z_n =1_Q$ such that 
\begin{itemize}
	\item $Qz_0$ is amenable.
	\item $Qz_n$ is a full prime factor for all $n \geq 1$.
\end{itemize}
In particular, if $\Gamma$ is non-amenable and $A_0 \neq \mathbb{C}$ then $M$ is a full prime factor and if $\mathcal{L}(\Gamma)$ is solid then $M$ is solid.
\end{letterthm}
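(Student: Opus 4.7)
The plan is to reduce the theorem to the following relative solidity statement: for every subalgebra with expectation $Q \subset pMp$ (where $p=1_Q$) with $Q \nprec_M \mathcal{L}(\Gamma)$, the relative commutant $Q' \cap pMp$ is amenable (relative to $M$). Granted this, the sequence $(z_n)$ is obtained by cutting along $\mathcal{Z}(Q)$: let $z_0$ be the maximal central projection such that $Qz_0$ is amenable, then decompose the complement into factorial summands $Qz_n$. Each such $Qz_n$ is non-amenable but has amenable relative commutant in $z_n M z_n$, and from this standard arguments give primeness (a non-trivial tensor splitting would embed a non-amenable algebra into the relative commutant) and fullness (a non-trivial central sequence of $Qz_n$ would, via Connes--Tomita--Takesaki, produce non-amenable elements commuting with $Qz_n$). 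The final two assertions then follow: if $A_0 \neq \C$ and $\Gamma$ is non-amenable then $M$ itself does not intertwine into $\mathcal{L}(\Gamma)$, so taking $Q = M$ gives fullness and primeness; and if $\mathcal{L}(\Gamma)$ is solid then for $Q \prec \mathcal{L}(\Gamma)$ the intertwining corner falls inside a solid algebra, from which solidity of $M$ follows by a routine argument.

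\textbf{Deformation and core reduction.} For the relative solidity statement I would follow the Chifan--Ioana blueprint but in the non-tracial setting. The first ingredient is a malleable deformation of the Bernoulli shift: enlarge $(A_0,\varphi_0)$ to $(\tilde A_0,\tilde\varphi_0)$ (for instance by a free product with another copy of itself or with the standard Araki--Woods construction) so that there is a state-preserving one-parameter flow $(\alpha_t)$ on $\tilde A_0^{\ovt\Gamma}$, pointwise converging to the identity, together with a symmetry $\beta$ satisfying $\beta\alpha_t\beta = \alpha_{-t}$ and fixing $A_0^{\ovt\Gamma}$. These extend to a flow on $\tilde M = \tilde A_0^{\ovt\Gamma}\rtimes\Gamma$ with normal faithful conditional expectation $E_M:\tilde M\to M$. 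Since we are in the type III setting I would then pass to the continuous cores $c(M) \subset c(\tilde M)$, where everything becomes semifinite and the deformation lifts to a trace-preserving flow, so as to recover a Hilbert bimodule framework suitable for spectral gap arguments. Here one uses the key fact that $c(\mathcal{L}(\Gamma)) = \mathcal{L}(\Gamma)\ovt L(\R)$ and that the core of a Bernoulli crossed product still has a generalized Bernoulli-like structure over $\mathcal{L}(\Gamma)$.

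\textbf{Spectral gap dichotomy.} The technical heart is the following dichotomy for $Q \subset pMp$: either $\alpha_t \to \id$ uniformly on the unit ball of $Q$ (in the $2$-norm associated to a chosen normal state), or $Q \prec_M \mathcal{L}(\Gamma)$. The mechanism, going back to Popa's spectral gap rigidity, is that the $A_0^{\ovt\Gamma}$-bimodule $L^2(\tilde M \ominus M)$ is weakly contained in a coarse bimodule relative to $\mathcal{L}(\Gamma)$ (this reflects the mixing of the Bernoulli shift on the complementary chaos pieces), so a $Q$-almost-central sequence in this module must concentrate on the $\mathcal{L}(\Gamma)$-sub-bimodule, and a standard intertwining-by-bimodules argument then produces the embedding into $\mathcal{L}(\Gamma)$. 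Making this work in the non-tracial setting is where I expect the main obstacle to lie: one needs a type III version of the intertwining symbol $\prec_M$, and in particular the stability properties of $\prec_M$ with respect to passing to the core and to relative amenability. I would appeal to the machinery developed by Houdayer--Isono--Vaes for these manipulations and, after reducing to the semifinite core, apply the tracial version of the Chifan--Ioana spectral gap argument relatively to $c(\mathcal{L}(\Gamma))$.

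\textbf{From spectral gap to amenability of the commutant.} Once uniform convergence $\alpha_t\to\id$ on the ball of $Q$ is secured, the relative commutant $N := Q' \cap pMp$ is approximately fixed by $\alpha_t$ in the appropriate bimodular norm, which by a standard $s$-transformation / transversality argument embeds $N$ with expectation into $\alpha_s(M) \subset \tilde M$ for small $s$, and hence, via the malleable structure, into $M \ovt P$ where $P$ is amenable (a Bernoulli-type corner supplied by $\tilde A_0\ominus A_0$). Combining this with the Anantharaman-Delaroche / Ozawa--Popa characterization of amenability in terms of the existence of a norm-one projection from a coarse bimodule, together with relative amenability results for inclusions with expectation, yields that $N$ is amenable relative to $M$, and in fact amenable by an argument using that $M$ itself admits a normal conditional expectation onto the hyperfinite part of its core. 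This completes the proof of the relative solidity statement and hence of Theorem~\ref{solid}.
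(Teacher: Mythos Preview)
Your proposal has the right ingredients but the spectral gap argument runs in the wrong direction, and this breaks the proof. The dichotomy you state --- either $\alpha_t \to \id$ uniformly on the unit ball of $Q$, or $Q \prec_M \mathcal{L}(\Gamma)$ --- is false: take $Q = A_0^{\ovt\Gamma}$, on which the deformation certainly does not converge uniformly, yet $A_0^{\ovt\Gamma} \nprec_M \mathcal{L}(\Gamma)$ whenever $A_0 \neq \C$. Popa's spectral gap principle goes the other way: proper non-amenability of the \emph{commutant} $Q' \cap pMp$, together with coarseness of $M \subset \tilde M$, forces rigidity of $Q$ (when $Q$ is finite). Consequently your next step also fails: rigidity of $Q$ locates $Q$ itself (one gets $Q \prec_{\tilde M} \alpha_1(M)$ via the malleable machinery), and there is no mechanism by which it makes $Q' \cap pMp$ ``approximately fixed'' or amenable. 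Your reduction target is in any case too weak: amenability of $Q' \cap pMp$ does not yield a factorial decomposition of the properly non-amenable part of $Q$ (consider $Q = L^\infty[0,1]\ovt \mathcal{L}(\mathbb{F}_2)$ sitting inside itself --- the relative commutant is abelian but the center is diffuse).

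The paper instead applies rigidity to the \emph{center} $Z = \mathcal{Z}(Q)$ of a properly non-amenable $Q$ with diffuse center: since $Z' \supset Q$ is properly non-amenable and $Z$ is finite, $Z$ is rigid, hence $Z \prec_{\tilde M} \theta_1(M)$; a dichotomy lemma based on the commuting square $E_M \circ E_{\theta_1(M)} = E_{\mathcal{L}(\Gamma)}$ and the Houdayer--Isono intertwining criterion then gives $Z \prec_M \mathcal{L}(\Gamma)$ or $Z \prec_M \ovt_F A_0$, and the second alternative is ruled out by showing that relative commutants of diffuse subalgebras of $\ovt_F A_0$ are amenable. The absorbing property of $\mathcal{L}(\Gamma) \subset M$ (from mixing) then upgrades $Z \prec_M \mathcal{L}(\Gamma)$ to $Q \prec_M \mathcal{L}(\Gamma)$. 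Note finally that the paper deliberately \emph{avoids} the continuous core: the non-tracial spectral gap is handled directly via Ocneanu ultraproducts, and the non-tracial intertwining via the Houdayer--Isono version of Popa's theorem, so no passage to a semifinite setting is needed.
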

The fullness of type $\mathrm{III}$ Bernoulli crossed products by non-amenable groups was already established by S.Vaes and P.Verraedt in \cite{Vaes2015296}. However, Theorem \ref{solid} provides many new examples of prime or solid type $\mathrm{III}$ factors. In fact, in \cite{ConnesAlmostPeriodic}, A. Connes introduced two new invariants for type $\mathrm{III}$ factors: the $\mathrm{Sd}$ invariant and the $\tau$ invariant. He used the noncommutative Bernoulli crossed products to construct type $\mathrm{III}$ factors with prescribed invariants $\mathrm{Sd}$ and $\tau$. By combining these constructions with Theorem \ref{solid}, we obtain the following corollary:

\begin{lettercor} \label{invariants}
For every countable subgroup $\Lambda \subset \mathbb{R}^*_+$, there exists a solid non-amenable type $\mathrm{III}$ factor with separable predual and with a Cartan subalgebra such that its $\mathrm{Sd}$ invariant is $\Lambda$. For any topology $\tau_0$ on $\mathbb{R}$ induced by an injective continuous separable unitary representation of $\mathbb{R}$, there exists a solid non-amenable type $\mathrm{III}_1$ factor with separable predual and with a Cartan subalgebra such that its $\tau$ invariant is $\tau_0$.
\end{lettercor}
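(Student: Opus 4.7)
This corollary follows by combining Theorem \ref{solid} with Connes's explicit noncommutative Bernoulli constructions of factors with prescribed $\mathrm{Sd}$ or $\tau$ invariants from \cite{ConnesAlmostPeriodic}. The recipe is: choose the amenable base $(A_0,\varphi_0)$ so that the modular data of $\varphi_0^{\ovt\Gamma}$ encode the target invariant; cross with a fixed non-amenable group $\Gamma$ for which $\mathcal{L}(\Gamma)$ is solid, so that Theorem \ref{solid} outputs a solid factor; and exhibit a Cartan subalgebra of the resulting $M=(A_0,\varphi_0)^{\ovt\Gamma}\rtimes\Gamma$ that is built from a Cartan subalgebra of $A_0$.

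For the $\mathrm{Sd}$ statement, I would fix $\Gamma=\F_2$, which is non-amenable and has $\mathcal{L}(\Gamma)$ solid by Ozawa's theorem. Given a countable $\Lambda\subset\R^*_+$ with generating set $\{\lambda_i\}_{i\in I}$, let $A_0$ be the von Neumann tensor product $\bigotimes_{i\in I}M_2(\C)$ equipped with the product state $\varphi_0=\bigotimes_i\varphi_{\lambda_i}$, where $\varphi_{\lambda_i}$ is the diagonal state on $M_2(\C)$ with eigenvalue ratio $\lambda_i$. Then $A_0$ is amenable with separable predual and $\varphi_0$ is faithful and almost periodic with point spectrum generating $\Lambda$. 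The dual state $\widetilde\varphi:=\varphi_0^{\ovt\Gamma}\circ E_{A_0^{\ovt\Gamma}}$ on $M$ is again almost periodic with the same point spectrum, and Connes's argument in \cite{ConnesAlmostPeriodic} yields $\mathrm{Sd}(M)=\Lambda$, with $M$ of type $\mathrm{III}$ as soon as $\Lambda\neq\{1\}$. Theorem \ref{solid} then makes $M$ solid. The $\tau$ part is parallel: for any admissible topology $\tau_0$, Connes provides an amenable base $(A_0,\varphi_0)$ with continuous modular spectrum realizing $\tau_0$, and the same Bernoulli construction with $\Gamma=\F_2$ produces a solid type $\mathrm{III}_1$ factor with $\tau(M)=\tau_0$.

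To produce a Cartan subalgebra of $M$, pick a Cartan subalgebra $C_0\subset A_0$; in the explicit matrix model above $C_0$ can simply be taken to be the tensor product of the diagonal subalgebras of the $M_2(\C)$ factors. I claim $C:=C_0^{\ovt\Gamma}$ is Cartan in $M$. A standard Fourier argument using freeness of the Bernoulli action on $(C,\varphi_0^{\ovt\Gamma}|_C)$ shows $C$ is maximal abelian in $M$. Its normalizer in $M$ contains the normalizer of $C_0^{\ovt\Gamma}$ in $A_0^{\ovt\Gamma}$---itself generating $A_0^{\ovt\Gamma}$ because $C_0\subset A_0$ is Cartan---together with the group unitaries $(u_g)_{g\in\Gamma}$, so it generates $M$. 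A faithful normal conditional expectation $M\to C$ is obtained by composing $E_{A_0^{\ovt\Gamma}}$ with the tensor product of the expectations $A_0\to C_0$.

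The main obstacle is purely bookkeeping: one must verify that Connes's original invariant computations, phrased on the ITPFI factor $(A_0,\varphi_0)^{\ovt\N}$, transfer to $M=(A_0,\varphi_0)^{\ovt\Gamma}\rtimes\Gamma$. This is routine since both $\mathrm{Sd}$ and $\tau$ are read off from the modular action on the centralizer of a state, and the modular group of $\widetilde\varphi$ on $M$ preserves $A_0^{\ovt\Gamma}$, restricts there to the modular group of $\varphi_0^{\ovt\Gamma}$, and fixes the canonical unitaries $(u_g)_{g\in\Gamma}$; Connes's analysis therefore carries over without change.
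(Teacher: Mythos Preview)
Your approach is essentially the same as the paper's: both take $\Gamma=\mathbb{F}_2$, invoke Connes's constructions from \cite{ConnesAlmostPeriodic} to choose the amenable base $(A_0,\varphi_0)$, apply Theorem~\ref{solid} together with Ozawa's solidity of $\mathcal{L}(\mathbb{F}_2)$ to conclude that $M$ is solid and non-amenable, and obtain the Cartan subalgebra as $C_0^{\ovt\Gamma}$ from a Cartan $C_0\subset A_0$ using proper outerness (equivalently, freeness on the abelian part) of the Bernoulli shift. The paper's proof is terser and simply cites the relevant results in \cite{ConnesAlmostPeriodic}, whereas you spell out the matrix model for $A_0$ and the verification of the Cartan conditions; the substance is identical.
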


Note that there was previously no known example of a non-amenable solid type $\mathrm{III}$ factor with a Cartan subalgebra. Using Theorem \ref{solid} we can generalize \cite[Theorem 7]{ChifanIoanaBern} by removing the assumption that the base equivalence relation is measure preserving and hence we also obtain the first examples of non-amenable solid type $\mathrm{III}$ equivalence relations.

\begin{lettercor} \label{equivalence}
Let $(X_0,\mu_0)$ be a probability space, $\mathcal{R}_0$ an arbitrary amenable non-singular equivalence relation on $X_0$ and $\Gamma$ any countable group. Let $\mathcal{R}=\mathcal{R}_0 \wr \Gamma$ be the \emph{wreath product equivalence relation} on $(X_0,\mu_0)^{\Gamma}$ defined by $(x_i) \sim_{\mathcal{R}} (y_i)$ if and only if there exists $g \in \Gamma$ and a finite subset $F \subset \Gamma$ such that

\begin{itemize}
	\item $\forall i \in F, \; x_i \sim_{\mathcal{R}_0} y_{gi}$
	\item $\forall i \in \Gamma \setminus F, \; x_i=y_{gi}$
\end{itemize}
Then $\mathcal{R}$ is solid, i.e.\ for any subequivalence relation $\mathcal{S} \subset \mathcal{R}$ there exists a countable partition of $(X_0,\mu_0)^{\Gamma}$ into $\mathcal{S}$-invariant components $Z_n, n \in \mathbb{N}$ such that
\begin{itemize}
	\item $\mathcal{S}_{| Z_0}$ is amenable.
	\item $\mathcal{S}_{| Z_n}$ is strongly ergodic and prime for all $n \geq 1$.
\end{itemize}
\end{lettercor}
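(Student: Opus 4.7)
The plan is to apply Theorem \ref{solid} after translating the statement into the Bernoulli crossed product setup and reading off the equivalence-relation conclusion from its operator-algebra counterpart.

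Set $A_0 = L(\mathcal{R}_0)$ and equip it with the faithful normal state $\varphi_0 = \mu_0 \circ E_0$, where $E_0\colon A_0 \to L^\infty(X_0,\mu_0)$ is the canonical conditional expectation and $\mu_0$ is viewed as the integration functional. Since $\mathcal{R}_0$ is amenable, $A_0$ is amenable. The definition of the wreath product gives a canonical identification
\[
L(\mathcal{R}) \cong (A_0,\varphi_0)^{\ovt \Gamma} \rtimes \Gamma = M,
\]
under which $L^\infty(X) = L^\infty(X_0,\mu_0)^{\ovt \Gamma}$ is the Cartan subalgebra associated with $\mathcal{R}$ on $X = X_0^{\Gamma}$. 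A subequivalence relation $\mathcal{S} \subset \mathcal{R}$ then gives rise to a subalgebra $Q = L(\mathcal{S}) \subset M$ containing $L^\infty(X)$, equipped with a canonical faithful normal conditional expectation from $M$, and satisfying $\mathcal{Z}(Q) = L^\infty(X)^{\mathcal{S}}$; in particular, the projections of $\mathcal{Z}(Q)$ correspond bijectively to the measurable $\mathcal{S}$-invariant subsets of $X$.

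To invoke Theorem \ref{solid} on $Q$, I must verify the hypothesis $Q \nprec_M \mathcal{L}(\Gamma)$. Since $L^\infty(X) \subset Q$, any net of unitaries in $L^\infty(X)$ witnessing $L^\infty(X) \nprec_M \mathcal{L}(\Gamma)$ automatically lies in $\mathcal{U}(Q)$ and hence witnesses $Q \nprec_M \mathcal{L}(\Gamma)$; thus it suffices to prove the stronger statement $L^\infty(X) \nprec_M \mathcal{L}(\Gamma)$. For infinite $\Gamma$, this follows from the mixing property of the Bernoulli shift: taking mean-zero unitaries in the tensor factors at coordinates going to infinity yields a net in $\mathcal{U}(L^\infty(X))$ along which $\|E_{\mathcal{L}(\Gamma)}(x \cdot y)\|_\varphi^{\sharp} \to 0$ for all $x,y$ in a dense $*$-subalgebra of $M$, an estimate that should be done with the non-tracial intertwining technology using the $\sharp$-norm of $\varphi$. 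The case $\Gamma$ finite is trivial, since then $\mathcal{R}$ is a finite product of copies of the amenable relation $\mathcal{R}_0$ and hence amenable.

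Once the hypothesis is in place, Theorem \ref{solid} provides a partition $1_Q = \sum_n z_n$ in $\mathcal{Z}(Q)$ with $Qz_0$ amenable and $Qz_n$ a full prime factor for every $n \geq 1$. Writing $z_n = 1_{Z_n}$ for $\mathcal{S}$-invariant measurable $Z_n \subset X$ produces the required partition $X = \bigsqcup_n Z_n$. The standard Cartan-inclusion dictionary translates amenability of $Qz_0 = L(\mathcal{S}|_{Z_0})$ into amenability of $\mathcal{S}|_{Z_0}$, factoriality of $Qz_n$ into ergodicity of $\mathcal{S}|_{Z_n}$, fullness of $Qz_n$ into strong ergodicity of $\mathcal{S}|_{Z_n}$, and primeness of $Qz_n$ as a factor into primeness of $\mathcal{S}|_{Z_n}$ as an equivalence relation. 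The main obstacle I anticipate is the verification of $L^\infty(X) \nprec_M \mathcal{L}(\Gamma)$ in the non-tracial setting, where one must work with $\sharp$-norms and a generally non-trace-preserving conditional expectation; every other step is a routine identification or a well-known translation between nonsingular countable equivalence relations and their Cartan-inclusion von Neumann algebras.
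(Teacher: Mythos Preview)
Your proposal is correct and follows essentially the same route as the paper: identify $\mathcal{L}(\mathcal{R})$ with the Bernoulli crossed product $M=(A_0,\varphi_0)^{\ovt\Gamma}\rtimes\Gamma$ for $A_0=\mathcal{L}(\mathcal{R}_0)$, note that $L^\infty(X)\subset Q=\mathcal{L}(\mathcal{S})\subset M$ with expectation, show $L^\infty(X)\nprec_M\mathcal{L}(\Gamma)$ and deduce $Q\nprec_M\mathcal{L}(\Gamma)$, apply Theorem~\ref{solid}, and translate back via the Cartan dictionary.

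Two minor remarks. First, your passage from $L^\infty(X)\nprec_M\mathcal{L}(\Gamma)$ to $Q\nprec_M\mathcal{L}(\Gamma)$ via ``the same witnessing net of unitaries'' implicitly invokes the implication in Theorem~\ref{criterion_intertwine} with $A=Q$, but $Q=\mathcal{L}(\mathcal{S})$ need not be finite in this type~$\mathrm{III}$ setting; the paper instead uses the contrapositive of Proposition~\ref{sub_embed} (if a subalgebra with expectation of $Q$ fails to embed, then $Q$ fails to embed), which requires no finiteness hypothesis on $Q$. Second, the obstacle you anticipate is not really one: $L^\infty(X)$ is abelian, hence finite, so Theorem~\ref{criterion_intertwine} applies directly to it, and moreover $L^\infty(X)$ sits inside the centralizer $M^\varphi$, so the $\sharp$-norm estimates reduce to the ordinary $L^2$-norm ones; the paper simply says this step is ``easy to check'' using that $L^\infty(X)\subset (A_0,\varphi_0)^{\ovt\Gamma}$ is diffuse.
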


This article is organized in the following way. Section 2 is devoted to preliminaries. We recall in particular the theory of ultraproducts for non-tracial von Neumann algebras. We also recall \emph{Popa's intertwining theorem} \cite{PopaBetti} and its recent generalization to arbitrary von Neumann algebras by C. Houdayer and Y. Isono \cite{HoudIsoUPF}. In Section 3, we introduce the notion of \emph{relative solidity} and its main properties. In Section 4, we recall the notion of \emph{s-malleable deformations} \cite{PopaMalleable1} and prove an abstract non-tracial version of Popa's \emph{spectral gap rigidity} argument \cite{PopaSpectralGap} by using ultraproduct techniques. Finally in Section 5, we prove our main theorems. Our proof follows the lines of Chifan and Ioana's original proof. Indeed, we introduce the same s-malleable deformation of the Bernoulli crossed product and we show that the same bimodule computation, and hence the key spectral gap property, still holds in the non-tracial situation. Therefore we can apply the spectral gap argument of Section 4. Houdayer and Isono's generalization of Popa's intertwining theorem is then used to prove a crucial dichotomy for rigid subalgebras, similar to the one used by Chifan and Ioana, from which the conclusion follows easily. Altogether, the proof does not use Takesaki's continuous decomposition of type $\mathrm{III}$ factors.

\subsubsection*{Acknowledgment}
We are very grateful to our advisor C. Houdayer for suggesting this problem and for his help during this work. We also thank R. Boutonnet for his useful comments.

\tableofcontents

\section{Preliminaries}
All mentioned von Neumann algebras $M$ are supposed to be \emph{countably decomposable} (or \emph{$\sigma$-finite}). This means that $M$ admits a faithful normal state. However, we do not assume that the predual $M_*$ is separable unless it is explicitly stated. We say that $M$ is \emph{diffuse} if it has no minimal projection, i.e.\ $pMp \neq \mathbb{C}p$ for every non-zero projection $p \in M$. We say that $M$ is \emph{properly non-amenable} if $Mz$ is non-amenable for every non-zero projection $z \in \mathcal{Z}(M)$. Equivalently $M$ is properly non-amenable if and only if $pMp$ is non-amenable for every non-zero projection $p \in M$.

An inclusion $N \subset M$ of two von Neumann algebras is always assumed to be unital. For non-unital inclusions we will use the notation $N \subset 1_NM1_N$ where the projection $1_N \in M$ is the unit of $N$. An inclusion $N \subset M$ is said to be \emph{with expectation} if there is a faithful normal conditional expectation $E_N: M \rightarrow N$. 

For every von Neumann algebra $M$, the Hilbert space $L^{2}(M)$ denotes the \emph{standard form} of $M$ \cite[Chapter $\mathrm{IX}$, Section $1$]{TakesakiII}. For every $\xi \in L^2(M)$ we use the $M$-$M$-bimodule notation $a\xi b$ for the left and right action of $a,b \in M$. For every normal faithful semi-finite weight $\psi$ on $M$, there is a canonical map, denoted $x \mapsto x \psi^{1/2}$, from $\mathfrak{n}_\psi = \{ x \in M \mid \psi(x^*x) < +\infty \}$ to $L^2(M)$ such that $\langle x \psi^{1/2} , y \psi^{1/2} \rangle = \psi(y^*x)$ for all $x,y \in \mathfrak{n}_\psi$. This map identifies $L^2(M)$ with the completion of $\mathfrak{n}_\psi$ with respect to the inner product $(x,y) \mapsto \psi(y^*x)$. Moreover $\mathfrak{n}_\psi$ is a left ideal in $M$ and the map $x \mapsto x \psi^{1/2}$ is compatible with the left multiplication: $a(x\psi^{1/2})=(ax)\psi^{1/2}$ for every $a \in M, x \in \mathfrak{n}_\psi$. Similarly there is a map $x \mapsto \psi^{1/2}x$ from $\mathfrak{n}_\psi^*=\{ x \in M \mid \psi(xx^*) < + \infty \}$ to $L^2(M)$ with the similar properties. If $\psi$ is finite then $1 \in \mathfrak{n}_\psi=\mathfrak{n}_\psi^*=M$ and $\psi^{1/2}$ makes sense as a vector in $L^2(M)$ in this case.
 
For a von Neumann algebra $M$ with a faithful normal semi-finite weight $\psi$, let $t \mapsto \sigma_t^\psi$ denote the modular flow of $\psi$. An element $x \in M$ is said to be $\psi$-\emph{analytic} if the function $t \mapsto \sigma_t^\psi(x)$ extends to an analytic function defined on the entire complex plane $\mathbb{C}$, which will be still denoted $z \mapsto \sigma_z^\psi(x)$. The $\psi$-analytic elements form a dense $*$-algebra in $M$ (see \cite[Chapter VIII, Lemma 2.3]{TakesakiII}). We let $M^\psi$ denote the centralizer of $\psi$, i.e.\ the fixed point algebra of $\sigma^\psi$. For every $\psi$-analytic element $x \in M$ we have $x\psi^{1/2}=\psi^{1/2} \sigma_{i/2}^\psi(x)$.
 
 If $E_N: M \rightarrow N$ is a faithful normal conditional expectation and $\varphi$ is a faithful normal state on $N$ then for the faithful normal state on $M$ defined by $\widetilde{\varphi}=\varphi \circ E_N$ we have that
\[ \forall x \in N, \; \sigma_t^\varphi(x)=\sigma_t^{\widetilde{\varphi}}(x). \]
In particular $N$ is globally invariant by $\sigma_t^\psi$. Conversely, if $\varphi$ is a faithful normal state on $M$ such that $N$ is globally invariant by $\sigma_t^\varphi$ for all $t \in \mathbb{R}$, then by a theorem of Takesaki \cite[Chapter IX, Theorem 4.2]{TakesakiII} there exists a unique faithful normal conditional expectation from $M$ to $N$ which preserves $\varphi$. In particular, any subalgebra of $M^\varphi$ is with expectation in $M$. Also, we see that if $A \subset M$ is with expectation then $A' \cap M$ and $A \vee (A' \cap M)$ are also subalgebras with expectation in $M$.

Another important consequence is the following lemma which will be used frequently, without explicit reference. It already appears in \cite[Proposition 2.2]{HoudUedaFreeRig} but for the reader's convenience, we reproduce it here with a simpler proof.

\begin{lem}\label{expectation}
Let $A \subset M$ be an inclusion with expectation and let $e$ be a projection in $A$ or in $A' \cap M$. Then $eAe \subset eMe$ is also with expectation.
\end{lem}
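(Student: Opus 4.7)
The claim splits naturally into two cases, with essentially all the content in the second. When $e \in A$, the restriction $E_A|_{eMe}$ does the job: by the $A$-bimodularity of $E_A$ and the fact that $e \in A$, for every $x \in eMe$ one has $E_A(x) = E_A(exe) = e E_A(x) e \in eAe$, and both faithfulness and normality carry over, giving the desired conditional expectation.

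When $e \in A' \cap M$, no such direct restriction lands in $eAe$, and I reduce to Takesaki's theorem (recalled in the preliminaries) by constructing a suitable modular state. Concretely it is enough to produce a faithful normal state $\varphi$ on $M$ such that $\sigma_t^\varphi(A) = A$ for all $t \in \R$ and $e \in M^\varphi$. Given such a $\varphi$, the rescaled compression $\omega := \varphi(e)^{-1}\varphi|_{eMe}$ is a faithful normal state on $eMe$ whose modular flow is $\sigma_t^\varphi|_{eMe}$ (since $e$ lies in the centralizer), and $\sigma_t^\omega(eAe) = \sigma_t^\varphi(e)\sigma_t^\varphi(A)\sigma_t^\varphi(e) = eAe$, so Takesaki's theorem furnishes the required expectation $eMe \to eAe$.

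To produce $\varphi$, I use the remark immediately preceding the lemma: $A' \cap M \subset M$ is itself with expectation. Pick a faithful normal state $\psi$ on $A' \cap M$ having $e$ in its centralizer — always possible by the averaging trick $\psi(x) := \psi_0(exe) + \psi_0((1-e)x(1-e))$ for any faithful normal $\psi_0$ — and then a faithful normal state $\chi$ on $A$ with $\chi|_{\mathcal{Z}(A)} = \psi|_{\mathcal{Z}(A)}$, feasible since $\mathcal{Z}(A) \subseteq A \cap (A' \cap M)$. On $B := A \vee (A' \cap M)$ the subalgebra $\mathcal{Z}(A)$ sits centrally, so $B$ decomposes as a direct integral over $\mathcal{Z}(A)$ with fibers $B_x = A_x \ovt (A' \cap M)_x$ (the first tensorand being a factor). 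Assembling the fiberwise product states $\chi_x \otimes \psi_x$ yields a faithful normal state $\omega_B$ on $B$ restricting to $\chi$ on $A$ and to $\psi$ on $A' \cap M$; then $\varphi := \omega_B \circ E_B$, with $E_B : M \to B$ the expectation supplied by the same remark, satisfies both requirements by construction.

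The one non-formal ingredient is the assembly of $\omega_B$: the nontrivial overlap $\mathcal{Z}(A) = A \cap (A' \cap M)$ prevents a naive product state on $A \otimes (A' \cap M)$ from descending to $B$, so one genuinely needs the direct-integral/relative-tensor structure of $B$ over $\mathcal{Z}(A)$ to produce a compatible state. Everything else — extending via $E_B$, checking invariance of $A$ and the centralizer condition on $e$, and invoking Takesaki — is entirely formal.
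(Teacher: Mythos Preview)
Your treatment of the case $e \in A$ matches the paper's exactly.

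For the case $e \in A' \cap M$, your overall strategy (produce a faithful normal state on $M$ with $A$ modular-invariant and $e$ in the centralizer, then apply Takesaki) is the same as the paper's, but your execution is much more involved and carries a genuine gap. The direct-integral decomposition of $B = A \vee (A' \cap M)$ over $\mathcal{Z}(A)$ requires separable predual, which the paper explicitly does \emph{not} assume (``we do not assume that the predual $M_*$ is separable unless it is explicitly stated''). You do not address how to handle the non-separable case, and your acknowledged ``one non-formal ingredient'' is precisely where this fails. Even granting separability, the assembly of $\omega_B$ demands nontrivial measurability verifications that you leave implicit.

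The paper avoids all of this with a single observation you missed: the averaging trick should be applied not to a state on $A' \cap M$, but to the expectation $E : M \to A$ itself. Define
\[
\tilde{E}(x) = E\bigl(exe + (1-e)x(1-e)\bigr).
\]
Because $e$ commutes with $A$, one checks directly that $\tilde{E}$ is again a faithful normal conditional expectation onto $A$, and by construction $e$ lies in the centralizer of $\psi := \varphi \circ \tilde{E}$ for \emph{any} faithful normal state $\varphi$ on $A$. Since $\sigma^\psi|_A = \sigma^\varphi$, the subalgebra $A$ is $\sigma^\psi$-invariant; restricting to $eMe$ and invoking Takesaki finishes the proof. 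No state-gluing, no direct integrals, no separability.

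In short: your detour through $B$ arises because you placed $e$ in the centralizer of a state on $A' \cap M$ and then had to propagate this up to $M$ while keeping $A$ invariant; the paper places $e$ in the centralizer of a state on $M$ from the outset by averaging $E$, so there is nothing to propagate.
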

\begin{proof}
Let $e \in A$ be a projection and $E: M \rightarrow A$ a faithful normal conditional expectation. Then $E(eMe) \subset eAe$, hence $E$ restricts to a faithful normal conditional expectation from $eMe$ to $eAe$.

Now let $e \in A' \cap M$. First we replace $E$ by a new faithful normal conditional expectation $\tilde{E}: M \rightarrow A$ defined by
\[ \forall x \in M,  \; \tilde{E}(x)=E(exe+(1-e)x(1-e)). \]
Now, pick $\varphi$ a faithful normal state on $A$ and let $\psi = \varphi \circ \tilde{E}$. Then we have $\sigma^\varphi=\sigma^\psi_{|A}$. Also, by construction $e$ is in the centralizer of $\psi$. Now define a new faithful normal state $\phi = \psi_{|eMe}$. Since $e \in M^\psi$, we have $\forall x \in eMe, \; \sigma_t^\phi(x)=\sigma_t^\psi(x)$. In particular, it is then clear that $eAe$ is globally invariant by $\sigma^\phi$ in $eMe$. Hence it is with expectation by Takesaki's theorem.
\end{proof}
  
\subsection*{Ultraproducts}
In this section we recall the construction of ultraproducts for non-tracial von Neumann algebras. A general reference on this topic is \cite{AndoHaagUltra}.

Let $M$ be any von Neumann algebra. Let $\omega$ be any free ultrafilter $\omega \in \beta \mathbb{N} \setminus \mathbb{N}$. In the von Neumann algebra $\ell^\infty(\mathbb{N},M)$ we define a $C^*$-subalgebra
\[ \mathcal{I}^\omega(M)=\{ (x_n)_{n} \in \ell^\infty(\mathbb{N},M) \mid \lim_{n \rightarrow \omega} x_n = 0 \; *\text{-strongly} \}. \] 
When $M$ is finite, $\mathcal{I}^\omega(M)$ is an ideal of $\ell^\infty(\mathbb{N},M)$ and one defines the \emph{ultraproduct} algebra $M^\omega$ as the quotient $\ell^\infty(\mathbb{N},M)/\mathcal{I}^\omega(M)$. 

When $M$ is not finite, $\mathcal{I}^\omega(M)$ is no longer an ideal and one introduces instead its \emph{multiplier algebra}
\[ \mathcal{M}^\omega(M)=\{ (x_n)_{n} \in \ell^\infty(\mathbb{N},M) \mid (x_n)_{n}\mathcal{I}^\omega(M) \subset \mathcal{I}^\omega(M) \text{ and } \mathcal{I}^\omega(M)(x_n)_{n} \subset \mathcal{I}^\omega(M)  \}. \] 
The quotient $M^\omega=\mathcal{M}^\omega(M)/\mathcal{I}^\omega(M)$ is always a von Neumann algebra \cite[Theorem 5.1]{OcneanuUltra} called the \emph{Ocneanu ultraproduct} of $M$. If $(x_n)_n \in \mathcal{M}^\omega(M)$ we denote by $(x_n)^\omega_n$ its image in the quotient $M^\omega$. One can identify $M$ with the algebra of constant sequences in $\mathcal{M}^\omega(M)$ and hence and we have a natural inclusion $M \subset M^\omega$. This inclusion is with expectation since we have a canonical faithful normal conditional expectation $E^\omega: M^\omega \rightarrow M$ defined by
\[ E^\omega((x_n)^\omega_n)=\lim_{n \rightarrow \omega} x_n \; \text{ in the weak* topology.} \]  
If $\varphi$ is a faithful normal state on $M$, we denote by $\varphi^\omega$ the faithful normal state on $M^\omega$ defined by $\varphi^\omega= \varphi \circ E^\omega$. It holds for the modular flow of $\varphi^\omega$ that
\[ \sigma_t^{\varphi^\omega}((x_n)_n^\omega)=(\sigma_t^{\varphi}(x_n))_n^\omega. \]

Let $N \subset M$ be a subalgebra. Then $\ell^\infty(\mathbb{N},N) \subset \ell^\infty(\mathbb{N},M)$ and $\mathcal{I}^\omega(N) \subset \mathcal{I}^\omega(M)$. If $N \subset M$ is with expectation then we also have $\mathcal{M}^\omega(N) \subset \mathcal{M}^\omega(M)$. Hence, in this case, we can identify $N^\omega$ canonically with a von Neumann subalgebra of $M^\omega$. Moreover, the inclusion $N^\omega \subset M^\omega$ is with expectation. Indeed if $E_N: M \rightarrow N$ is a faithful normal conditional expectation, then we can define a faithful normal conditional expectation $E_{N^\omega}: M^\omega \rightarrow N^\omega$ by
\[ E_{N^\omega}((x_n)_n^\omega)=(E_N(x_n))_n^\omega. \] 

\subsection*{Popa's intertwining theory}

In this section we recall the powerful method for intertwining subalgebras developed by S. Popa \cite[Appendix]{PopaBetti} and \cite[Theorem 2.1, Corollary 2.3]{PopaMalleable1}. We will also need the recent generalization of this method to arbitrary von Neumann algebras by C. Houdayer and Y. Isono \cite{HoudIsoUPF}.

The following lemma will be used a lot, without explicit reference. A proof can be found in \cite[Lemma 5.5, Chapter $\mathrm{XIV}$]{TakesakiIII}.
\begin{lem}\label{corner}
Let $A$ be a subalgebra of a von Neumann algebra $M$ and $e$ a projection in $A$ or in $A' \cap M$. We have $e(A' \cap M)e=(eAe)' \cap eMe$ and $e(A \vee (A' \cap M))e=eAe \vee e(A' \cap M)e$.
\end{lem}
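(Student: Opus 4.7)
Plan: The lemma contains two assertions; in each case the projection $e$ lies either in $A$ or in $A' \cap M$, and I would handle these four subcases separately, reducing the single genuinely non-trivial one to a classical fact.

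For the equality $e(A'\cap M)e=(eAe)'\cap eMe$, the inclusion $\subset$ (under either hypothesis on $e$) reduces to a direct algebraic verification: for $x\in A'\cap M$ and $a\in A$, expanding $(exe)(eae)$ and $(eae)(exe)$ and using $xa=ax$ together with the fact that $e$ commutes with one of $A$ or $A'\cap M$ identifies the two products. For the reverse inclusion $\supset$ when $e\in A'\cap M$, I would use $eae=ae=ea$ to rewrite the commutation $y(eae)=(eae)y$ for $y\in (eAe)'\cap eMe$; a short calculation shows that both $ya$ and $ay$ lie in $eMe$ and that $ya\cdot e=ay$, so in fact $ya=ay$ for every $a\in A$, hence $y\in A'\cap M$ and $y=eye\in e(A'\cap M)e$.

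The only genuinely non-trivial case is $\supset$ when $e\in A$. For this I would invoke the classical reduction theorem: letting $z=z_A(e)\in \mathcal{Z}(A)$ denote the central support of $e$ in $A$, the compression map $z(A'\cap M)\to (eAe)'\cap eMe$, $w\mapsto ewe$, is a $*$-isomorphism (and in particular surjective), yielding exactly the desired inclusion. The standard proof decomposes $z=\sum_i v_i^*v_i$ with partial isometries $v_i\in A$ satisfying $v_iv_i^*\le e$ (via Murray--von Neumann comparison inside $A$), and then builds an explicit inverse of the compression by a matrix-unit style formula assembled from the $v_i$.

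For the second equality $e(A\vee (A'\cap M))e=eAe\vee e(A'\cap M)e$, the inclusion $\supset$ is immediate. For $\subset$, assume $e\in A$ (the case $e\in A'\cap M$ is symmetric). Since $A'\cap M$ commutes with $A$, every element of the $*$-algebra generated by $A$ and $A'\cap M$ is a sum of products $ab$ with $a\in A$ and $b\in A'\cap M$. As $e\in A$ commutes with $b$, one has $eabe=(eae)(ebe)\in eAe\cdot e(A'\cap M)e$. Because $A\vee (A'\cap M)$ is the weak-$*$ closure of this $*$-algebra and $x\mapsto exe$ is weak-$*$ continuous, $e(A\vee (A'\cap M))e$ is contained in the weak-$*$ closure of $(eAe)\cdot (e(A'\cap M)e)$, i.e.\ in $eAe\vee e(A'\cap M)e$. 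The main obstacle of the whole plan is exactly the reduction-theorem step above, which genuinely requires Murray--von Neumann comparison theory to produce a pre-image of an arbitrary element of $(eAe)'\cap eMe$; everything else reduces to routine manipulations in $eMe$.
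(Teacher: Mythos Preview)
Your proof plan is correct. Note, however, that the paper does not actually supply a proof of this lemma: it simply cites \emph{Lemma 5.5, Chapter XIV} of Takesaki's \emph{Theory of Operator Algebras III}. Your outline is precisely the standard argument found there---in particular, the identification of the reduction theorem (surjectivity of $w\mapsto ewe$ from $z(A'\cap M)$ onto $(eAe)'\cap eMe$ via a decomposition $z=\sum_i v_i^*v_i$ with partial isometries $v_i\in A$, $v_iv_i^*\le e$) as the only substantive step is exactly how the textbook proof proceeds, and the remaining cases are indeed routine. So there is nothing to compare: you have reconstructed the referenced proof.
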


\begin{defs} \label{def_semi_conj}
Let $M$ be a von Neumann algebra and let $A \subset 1_AM1_A$ and $B \subset 1_BM1_B$ be two subalgebras with expectations. For $h \in M$, we say that $A$ and $B$ are \emph{semi-conjugated} by $h \in M$ if 
\[ \forall x \in A, \; hx=0 \Rightarrow x=0 \]
 and there exists an (onto) $*$-isomorphism $\psi: A \rightarrow B$ such that 
\[ \forall x \in A, \; \psi(x)h=hx. \]
We denote this relation by $A \sim_h B$. We will also write $A \sim_M B$ if there exists an $h \in M$ such that $A \sim_h B$.
\end{defs}

\begin{rem}\label{semi-conj}
We make the following easy remarks: 
\begin{itemize}
\item The $*$-isomorphism $\psi$ is completely determined by $h$.
\item $A \sim_h B$ if and only if $B \sim_{h^*} A$.  
\item $A \sim_h B$ implies that $A \sim_{1_B h1_A} B$ so we can always suppose that $h \in 1_BM1_A$.
\item If $h=v|h|$ is the polar decomposition of $h$ then $A \sim_h B$ implies that $A \sim_{v} B$ so we can always suppose that $h$ is a partial isometry in $1_BM1_A$. 
\end{itemize}
However, we note a very important difficulty with the relation $\sim_M$: it is \emph{not transitive}. Indeed if $A \sim_h B$ and $B \sim_g C$, then nothing guarantees that $gh \neq 0$. For this, one has to control the relative commutants of the involved subalgebras. For example, an interesting special case is when $A' \cap 1_AM1_A \subset A$ and $B' \cap 1_BM1_B \subset B$ (e.g.\ $A$ and $B$ maximal abelian). Indeed, in this case, if a partial isometry $v \in 1_BM1_A$ satisfies $A \sim_v B$ then we get a genuine conjugacy, i.e.\ $v^*v=1_A$, $vv^*=1_B$ and $B=vAv^*$. In the general case however, we only know that $e=v^*v \in A' \cap 1_AM1_A$ and $f=vv^* \in B' \cap 1_BM1_B$. Using Lemma \ref{corner}, we see that the isomorphism $\mathrm{Ad}(v): eMe \rightarrow fMf$ sends $Ae$ onto $Bf$ and $(Ae)' \cap eMe=e(A'\cap 1_AM1_A)e$ onto $(Bf)' \cap fBf = f(B' \cap 1_BM1_B)f$. In particular, $e(A \vee (A' \cap 1_AM1_A))e$ is conjugated to $f(B \vee (B' \cap 1_BM1_B))f$ by $v$.

\end{rem}

\begin{defs}
Let $M$ be a von Neumann algebra and let $A \subset 1_AM1_A$ and $B \subset 1_BM1_B$ be two subalgebras with expectation. We say that \emph{a corner of $A$ embeds with expectation into a corner of $B$ inside $M$} if there exists non-zero projections $p \in A$ and $q \in B$ and a subalgebra with expectation $C \subset qBq$ such that $pAp \sim_M C$.
\end{defs}

 We will use the notation $A \prec_M B$ to say that a corner of $A$ embeds with expectation into a corner of $B$ inside $M$ and $A \nprec_M B$ to say that no corner of $A$ embeds with expectation into a corner of $B$ inside $M$.	

\begin{rem}
We note that in order to check that $A \prec_M B$ it suffices to find projections $p \in A$, $q \in B$, a normal $*$-morphism $\psi: pAp \rightarrow qBq$ and a non-zero $h \in qMp$ such that $\psi(pAp) \subset qBq$ is with expectation and  $\forall x \in pAp, \psi(x)h=hx$. Indeed, in this case, there is a projection $p' \in \mathcal{Z}(pAp)$ and an injective $*$-morphism $\psi': p'Ap' \rightarrow qBq$ such that $\psi(x)=\psi'(p'x)$ for all $x \in pAp$. And since we have $h(1-p')=\psi(1-p')h=0$ we will have $h \in qMp'$ and $\forall x \in p'Ap', \psi'(x)h=hx$. Therefore, we may always suppose that $\psi$ is injective. Now, we can see $\psi$ as an $*$-isomorphism from $pAp$ to $C$ where $C \subset qBq$ is a subalgebra with expectation. However, we don't have yet the condition $\forall x \in pAp, hx=0 \Rightarrow x=0$. To fix this, note that $\{ x \in pAp \mid hx=0 \}$ is a two-sided ideal in $pAp$ so there exists a unique projeciton $p' \in \mathcal{Z}(pAp)$ such that $hx=0 \Leftrightarrow p'x=0$. We have $p' \neq 0$ because $h \neq 0$. Let $q'=\psi(p') \in \mathcal{Z}(C)$. Let $C'=Cq' \subset q'Bq'$ which is a subalgebra with expectation. Finally let $\psi': p'Ap' \rightarrow C'$ be the restriction of $\psi$. Now we still have $\forall x \in p'Ap', \psi'(x)h=hx$ and the condition $hx=0 \Rightarrow x=0$ is satisfied for all $x \in p'Ap'$ which means that $p'Ap' \sim_h C'$. Hence we have indeed $A \prec_M B$.
\end{rem}

The following nontrivial proposition will be used frequently in the sequel.

\begin{prop}[{\cite[Lemma 4.8]{HoudIsoUPF}}] \label{sub_embed}
Let $M$ be a von Neumann algebra and $A \subset 1_AM1_A$, $B \subset 1_BM1_B$ two subalgebras with expectation. Let $D \subset A$ be a subalgebra with expectation. If $A \prec_M B$ then $D \prec_M B$. 
\end{prop}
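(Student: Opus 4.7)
The plan is to adapt the intertwining data witnessing $A \prec_M B$ into analogous data for $D \prec_M B$, using the composed faithful normal conditional expectation $E\colon 1_AM1_A \to D$ (built from the given $E_D\colon A \to D$ and the expectation $1_AM1_A \to A$) to pass from a projection in $A$ to a projection in $D$.

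By the definition of $\prec_M$ together with Remark~2.3, the hypothesis $A \prec_M B$ yields non-zero projections $p \in A$ and $q \in B$, an injective normal $*$-morphism $\psi\colon pAp \to qBq$ whose image $C := \psi(pAp)$ is a subalgebra with expectation of $qBq$, and a non-zero partial isometry $v \in qMp$ satisfying $\psi(x)v = vx$ for $x \in pAp$ and $vx = 0 \Rightarrow x = 0$ for $x \in pAp$. Since $v^*v \in pMp \subset 1_AM1_A$ is non-zero and $E$ is faithful, $E(v^*v) \in D_+$ is non-zero. Pick a non-zero spectral projection $p_0 \in D$ of $E(v^*v)$ corresponding to an interval $[\varepsilon,1]$ bounded away from zero, so that $E(p_0v^*vp_0) = p_0E(v^*v)p_0 \geq \varepsilon p_0$; faithfulness of $E$ then forces $p_0v^*vp_0 \neq 0$, hence $vp_0 \neq 0$. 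Set $h' := vp_0 \in qMp_0$.

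The final step is to construct a normal $*$-morphism $\psi'\colon p_0Dp_0 \to qBq$ whose image is a subalgebra with expectation of $qBq$ and such that $\psi'(d)h' = h'd$ for every $d \in p_0Dp_0$; Remark~2.3 applied to the data $(p_0, q, \psi', h')$ will then yield the desired conclusion $D \prec_M B$. This construction is the main obstacle of the proof: the naive choice $\psi'(d) := \psi(pdp)$ fails to be a $*$-homomorphism because $p \notin D$ in general, and computing $h'd$ for $d \in p_0Dp_0$ produces, beyond the desired term $\psi(pdp)v$, an error term coming from the component $pd(1-p)$ that must be eliminated. The resolution is to further cut down the data by suitable support projections (or equivalently to use the basic construction, as in \cite{HoudIsoUPF}) so that the error terms vanish and a genuine $*$-morphism with image lying in $C$ is obtained; the with-expectation property of this image is then obtained by transporting, through $\psi$, the expectations available on the chain $D \subset A \subset 1_AM1_A$.
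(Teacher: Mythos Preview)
The paper does not give its own proof of this proposition; it is quoted verbatim from \cite[Lemma 4.8]{HoudIsoUPF} and labeled ``nontrivial'' precisely because the algebraic intertwining definition does not make it obvious. So there is no in-paper argument to compare against, only the question of whether your proof stands on its own.

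It does not. Your first two paragraphs are fine: extracting the intertwining data $(p,q,\psi,v)$ and producing a projection $p_0 \in D$ with $vp_0 \neq 0$ is routine. The third paragraph, however, is where the proof should happen, and instead you explicitly name the obstacle and then wave it away. The sentence ``the resolution is to further cut down the data by suitable support projections (or equivalently to use the basic construction, as in \cite{HoudIsoUPF})'' is not a proof step; it is a promissory note. You do not say \emph{which} projections, you do not show that any cutting can force $p d (1-p)$ to act trivially under $v$ for all $d \in p_0 D p_0$, and you do not explain why the image of the resulting map would land in $C$ with expectation. In fact there is no reason to expect that a purely algebraic massage of $(p,q,\psi,v,p_0)$ will succeed: the projections $p \in A$ and $p_0 \in D$ are unrelated, and for generic $d \in p_0 D p_0$ the cross-term $v \, p d (1-p)$ has no reason to vanish.

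The actual argument in \cite{HoudIsoUPF} does not proceed by repairing this direct approach. It passes through an equivalent characterization of $\prec_M$ (via the basic construction $\langle M, B \rangle$ or, under finiteness hypotheses, via the analytic criterion of Theorem~\ref{criterion_intertwine}), where the monotonicity in the first variable becomes transparent: a witness for $A$ restricts to a witness for $D$ because $D \subset A$. If you want a self-contained proof, that is the route to take; the purely algebraic intertwining-data approach you began is a dead end for this statement.
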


The following generalization of the powerful intertwining theorem of S. Popa will be needed for Lemma \ref{alternative} which is crucial for the proof of Theorem \ref{solid}.

\begin{theo}[{\cite[Theorem 4.3]{HoudIsoUPF}}] \label{criterion_intertwine}
Let $M$ be any von Neumann algebra and $A \subset 1_AM1_A$, $B \subset 1_BM1_B$ two subalgebras with expectation. Suppose that $A$ is finite and choose a faithful normal conditional expectation $E_B: 1_BM1_B \rightarrow B$. Then the following are equivalent:
\begin{itemize}
	\item $A \nprec_M B$.
	\item There exists a net of unitaries $(u_i)_{i \in I}$ in $\mathcal{U}(A)$ such that
				\[ \forall x,y \in 1_BM1_A, \; E_B(xu_iy^*) \rightarrow 0 \]
				in the $*$-strong topology.
\end{itemize}
\end{theo}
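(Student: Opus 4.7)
The plan is to follow the blueprint of Popa's original intertwining theorem \cite{PopaBetti}, adapted to the non-tracial setting. The $(\Leftarrow)$ direction is a direct computation with the intertwining data; the $(\Rightarrow)$ direction requires a convexity/averaging argument in a Jones-type basic construction, where finiteness of $A$ plays the role of the tracial hypothesis in Popa's original proof.

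For $(\Leftarrow)$, I argue by contrapositive. Assume $A \prec_M B$. By the normalizations in Remark \ref{semi-conj}, we may take a partial isometry $v \in 1_BM1_A$ with $v^*v \leq p \in A$ and a normal $*$-isomorphism $\psi : pAp \to C$ onto a subalgebra with expectation $C \subset qBq$, satisfying $\psi(x)v = vx$ for $x \in pAp$. For any $u \in \mathcal{U}(A)$ and $a, b \in A$, take $x = va$, $y = vb$ (both in $1_BM1_A$). Using $B$-bimodularity of $E_B$ and $\psi(pAp) \subset B$,
\[ E_B(xuy^*) = \psi(p\,a\,u\,b^*\,p)\, E_B(vv^*). \]
If a net $(u_i)$ satisfied the second condition, this would force $p\,a\,u_i\,b^*\,p \to 0$ $*$-strongly in $A$ for all $a, b \in A$, since $\psi$ is a normal $*$-iso and $E_B(vv^*)$ is a fixed non-zero positive element of $B$. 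But a standard Fourier-expansion argument in the finite von Neumann algebra $A$ (using a faithful tracial state and an orthonormal basis of $L^2(A)$) shows that $\sum_{a,b} \|p\,a\,u\,b^*\,p\|_2^2$ stays uniformly bounded below in $u \in \mathcal{U}(A)$, a contradiction.

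For $(\Rightarrow)$, assume $A \nprec_M B$. Form the Jones basic construction $\langle M, e_B \rangle$ of $B \subset 1_BM1_B$ associated to $E_B$: the von Neumann algebra generated by $M$ acting on $L^2(1_BM1_B)$ and the Jones projection $e_B$. It carries a canonical faithful normal semifinite operator-valued weight back onto $M$, dual to $E_B$, which when composed with a faithful normal state on $B$ gives a faithful normal semifinite trace $\operatorname{Tr}$ on $\langle M, e_B \rangle$. Consider the weak-operator closed convex hull
\[ K = \overline{\operatorname{co}}^{w}\{u e_B u^* : u \in \mathcal{U}(A)\} \subset \langle M, e_B \rangle^+. \]
The plan is to show $0 \in K$; a standard Day-type reindexing then produces the desired net of unitaries from convex combinations approximating $0$. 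Suppose for contradiction $0 \notin K$. Since $A$ is finite, $\mathcal{U}(A)$ admits a biinvariant mean (its center-valued trace), and a Ryll-Nardzewski fixed-point argument yields a non-zero element $\xi \in K \cap (A' \cap \langle M, e_B \rangle)$.

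Cutting by a spectral projection of $\xi$ with finite $\operatorname{Tr}$-trace yields a non-zero $A$-central projection $P \in \langle M, e_B \rangle$ of finite trace. By the Popa dictionary, such a $P$ decomposes (essentially) as $P = \sum_i v_i e_B v_i^*$ with $v_i \in 1_BM1_A$, and $A$-centrality forces any nonzero $v_i$ to semi-conjugate a corner of $A$ into a subalgebra-with-expectation of a corner of $B$, yielding $A \prec_M B$ in the sense of Definition \ref{def_semi_conj} --- a contradiction. The main obstacle is implementing this dictionary in the non-tracial setting: $\operatorname{Tr}$ on $\langle M, e_B \rangle$ is only semifinite, so the existence of a finite-trace spectral projection of $\xi$ requires care (and uses finiteness of $A$ in an essential way), and the resulting subalgebra $C \subset qBq$ must be verified to be with expectation. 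These technicalities are precisely what Houdayer-Isono handle in \cite[Section 4]{HoudIsoUPF}.
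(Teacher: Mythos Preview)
The paper does not prove this theorem; it is quoted from \cite[Theorem~4.3]{HoudIsoUPF} without argument, so there is no in-paper proof to compare against. Your outline is the Popa blueprint that Houdayer--Isono adapt, but both halves as written have gaps beyond those you flag in your final sentence.

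For $(\Leftarrow)$: from $\psi(p a u_i b^* p)\,E_B(vv^*)\to 0$ you cannot conclude $p a u_i b^* p\to 0$, since $E_B(vv^*)\in qBq$ need be neither invertible nor in the commutant of $C=\psi(pAp)$. The proposed ``Fourier expansion over an orthonormal basis of $L^2(A)$'' does not help: such a sum typically diverges, and in any case term-wise convergence to $0$ for each fixed pair $(a,b)$ does not contradict a uniform lower bound on an infinite sum. (Concretely, already for $A=M_2(\C)$ and $p$ a rank-one projection one has $pup=0$ for suitable unitaries $u$.) The usual route goes through the basic construction instead: the intertwining data yield a nonzero $A$-central positive element $f$ in $1_A\langle M,e_B\rangle 1_A$ for which the pairing with $u_i e_B u_i^*$ is a nonzero constant, while the net condition forces $u_i e_B u_i^*\to 0$ in the relevant weak topology. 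For $(\Rightarrow)$: composing the dual operator-valued weight $\widehat{E_B}$ with a faithful normal \emph{state} on $B$ yields a trace on $\langle M,e_B\rangle$ only when that state is already tracial; in general one obtains merely a f.n.s.\ weight, so your ``spectral projection of $\xi$ with finite $\operatorname{Tr}$-trace'' step has no a priori meaning. Constructing a suitable $A$-central element in this setting and then extracting a subalgebra $C\subset qBq$ \emph{with expectation} is exactly the substantive content of \cite[Section~4]{HoudIsoUPF}, not a consequence of a fixed-point argument alone (and, as a minor point, the center-valued trace is not a mean on $\mathcal U(A)$).
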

\begin{rem}\label{density}
A useful fact is that it is sufficient to check the condition $E_B(xu_iy^*) \rightarrow 0$ only for $x,y$ in a dense subset of $1_BM1_A$ (see \cite[Theorem 4.3 (5)]{HoudIsoUPF}). Another useful trick is that if we have a family of subalgebras with expectation $(B_j)_{j \in J}$ such that $A \nprec_M B_j$ for all $j \in J$, then we can construct a net of unitaries $(u_i)_{i \in I}$ satisfying the condition simultaneously for all the subalgebras $B_j, j\in J$. To see this, one can first reduce to the case where $J$ is finite and then apply Theorem \ref{criterion_intertwine} to $A$ and $B=\bigoplus_{j \in J} B_j$, viewed as subalgebras of $\bigoplus_{j \in J} M$. This idea goes back to \cite[beginning of the proof of Theorem 3.4]{Ioana2008}.
\end{rem}

Finally, we consider an absorption property which was very prominent in Popa's work on deformation/rigidity, mainly because it helps to solve the relative commutant issue explained in Remark \ref{semi-conj}.
\begin{defs}
Let $M$ be a von Neumann algebra and $N \subset M$ a subalgebra with expectation. We say that $N \subset M$ is \emph{absorbing} if for every diffuse subalgebra with expectation $Q \subset 1_QN1_Q$, we have $Q' \cap 1_QM1_Q \subset 1_QN1_Q$.  
\end{defs} 

Obviously, the trivial inclusions $M \subset M$ and $\mathbb{C} \subset M$ are always absorbing (since $\mathbb{C}$ has no diffuse subalgebras). As it was observed by S. Popa, more interesting examples of absorbing inclusions are given by \emph{mixing} actions:

\begin{ex}[{\cite[Section 3]{PopaMalleable1}}]  \label{mixing}  Let $(A,\varphi)$ be a von Neumann algebra with a faithful normal state $\varphi$ and $\sigma: \Gamma \rightarrow \mathrm{Aut}(A,\varphi)$ a $\varphi$-preserving action of a discrete group $\Gamma$, let $M=A \rtimes_\sigma \Gamma$ be the crossed product von Neumann algebra. Suppose that the action $\sigma$ is \emph{mixing}, i.e.
\[ \forall a,b \in A, \; \lim_{g \rightarrow \infty} \varphi(a\sigma_g(b))=\varphi(a)\varphi(b). \] 

 Then the inclusion $\mathcal{L}(\Gamma) \subset M$ is absorbing. 
\end{ex}
 
 Other examples of this absorption phenomenon come from some group inclusions as well as free products \cite{popa1983orthogonal} and amalgamated free products \cite[Theorem 1.1]{Ioana2008}.
 
Later on, we will need the following lemma: 
\begin{lem} \label{abs_com}
Let $N \subset M$ be an absorbing inclusion. Let $Q \subset 1_QM1_Q$ be a diffuse subalgebra with expectation. If $Q \prec_M N$ then there exists a non-zero partial isometry $v \in M$ such that $v^*v \in Q \vee (Q' \cap 1_QM1_Q)$, $vv^* \in N$ and $v(Q \vee (Q' \cap 1_QM1_Q))v^* \subset N$. In particular, we have $Q \vee (Q' \cap 1_QM1_Q) \prec_M N$.
\end{lem}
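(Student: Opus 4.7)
The plan is to unpack the definition of $Q \prec_M N$ into a concrete partial isometry $v$, and then to use the absorbing hypothesis to show that this same $v$ automatically conjugates all of $Q \vee (Q' \cap 1_QM1_Q)$ into $N$.

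Unpacking $Q \prec_M N$ together with the reductions in Remark~\ref{semi-conj}, I obtain non-zero projections $p \in Q$ and $q \in N$, a subalgebra with expectation $C_0 \subset qNq$, and a partial isometry $v \in qMp$ such that $pQp \sim_v C_0$. Setting $e = v^*v \in (pQp)' \cap pMp$ and $f = vv^* \in C_0' \cap qMq$, the main input is the absorbing hypothesis: since $Q$ is diffuse so is $pQp$, and hence so is $C_0 \cong pQp$. Applying absorbing to the diffuse subalgebra with expectation $C_0 \subset qNq = 1_{C_0}N1_{C_0}$ gives $C_0' \cap qMq \subset qNq \subset N$. In particular $f \in N$.

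I now invoke the last assertion of Remark~\ref{semi-conj}: the isomorphism $\Ad(v)$ conjugates $e\bigl(pQp \vee ((pQp)' \cap pMp)\bigr)e$ onto $f\bigl(C_0 \vee (C_0' \cap qMq)\bigr)f$. By Lemma~\ref{corner}, the left-hand side equals $e(Q \vee (Q' \cap 1_QM1_Q))e$ and the right-hand side equals $fC_0f \vee f(C_0' \cap qMq)f$, which lies in $fNf$ by the previous paragraph. Combined with the observation $e \in p(Q' \cap 1_QM1_Q)p \subset Q \vee (Q' \cap 1_QM1_Q)$, this gives all the required properties of $v$.

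For the \emph{in particular} assertion, Lemma~\ref{corner} and absorbing also yield $f(C_0' \cap qMq)f = (fC_0f)' \cap fNf$, so the image $fC_0f \vee ((fC_0f)' \cap fNf)$ is a subalgebra with expectation of $fNf$ by Lemma~\ref{expectation} and the general principle recalled in the preliminaries that $A \vee (A' \cap M)$ is with expectation whenever $A \subset M$ is. Reading the conjugation by $v$ as a semi-conjugacy then exhibits $Q \vee (Q' \cap 1_QM1_Q) \prec_M N$ directly. The only real subtlety is the correct use of the absorbing hypothesis, which merely needs an expectation from $qNq$ onto $C_0$ (not from all of $N$); the remainder is routine bookkeeping via Lemma~\ref{corner} and Remark~\ref{semi-conj}.
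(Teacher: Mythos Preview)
Your proof is correct and follows essentially the same route as the paper's proof: both unpack $Q \prec_M N$ into a partial isometry $v$ implementing $pQp \sim_v C_0$, use Remark~\ref{semi-conj} and Lemma~\ref{corner} to rewrite the source and target as corners of $Q \vee (Q' \cap 1_QM1_Q)$ and of $C_0 \vee (C_0' \cap qMq)$, and then invoke the absorbing hypothesis on the diffuse subalgebra $C_0 \subset qNq$ to force everything on the target side into $N$. Your treatment of the ``in particular'' clause is in fact more explicit than the paper's, which stops after establishing $v(Q \vee (Q' \cap 1_QM1_Q))v^* \subset N$ and leaves the expectation property of the image implicit; your identification of the image as $fC_0f \vee ((fC_0f)' \cap fNf)$ and appeal to Lemma~\ref{expectation} fills this in cleanly.
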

\begin{proof}
Take non-zero projections $p \in Q$, $q \in N$, a subalgebra with expectation $C \subset qNq$  and a partial isometry $v \in qMp$ such that $pQp \sim_v C$. Then by Remark \ref{semi-conj}, we know that $v^*v \in (pQp)' \cap pMp$, $vv^* \in C' \cap qMq$ and 
\[ v(pQp \vee ((pQp)' \cap pMp))v^* \subset C \vee (C' \cap qMq) \]
 We have $pQp \vee ((pQp)' \cap pMp)=p(Q \vee (Q' \cap 1_QM1_Q))p$ so that $v^*v \in Q \vee (Q' \cap 1_QM1_Q)$ and $v(Q \vee (Q' \cap 1_QM1_Q))v^* \subset C \vee (C' \cap qMq)$.  Since $Q$ is diffuse, $C$ is also diffuse and  because $N$ is absorbing we have $C \vee (C' \cap qMq) \subset qNq$ so that we get $vv^* \in N$ and $v(Q \vee (Q' \cap 1_QM1_Q))v^* \subset N$. 
\end{proof}

\section{Relative solidity}
In \cite{TakaSolid}, N. Ozawa introduced the notion of \emph{solid} von Neumann algebras. It is easy to check that in the $\mathrm{II}_1$ case, the definition of solidity given in \cite{TakaSolid} is equivalent to the following one (see Proposition \ref{formulation}). 
\begin{defs}
Let $M$ be a von Neumann algebra. We say that $M$ is \emph{solid} if every properly non-amenable subalgebra with expectation $Q \subset 1_QM1_Q$ has discrete center.
\end{defs}

Equivalently, $M$ is \emph{solid} if and only if every subalgebra with expectation $Q \subset M$ is a direct sum of an amenable von Neumann algebra and a family (possibly empty) of non-amenable factors. This clearly shows the analogy with the notion of \emph{solid ergodicity} for equivalence relation discovered in \cite{ChifanIoanaBern} and formally introduced in \cite[Definition 5.4]{GaboriauSurvey}.

In this section, we are interested in a \emph{relative} version of solidity:

\begin{defs}
Let $M$ be a von Neumann algebra and $N \subset M$ a subalgebra with expectation. We say that $M$ is \emph{solid relatively to} $N$ if every properly non-amenable subalgebra with expectation $Q \subset 1_QM1_Q$ such that $Q \nprec_M N$ has discrete center.
\end{defs}

Clearly, a von Neumann algebra $M$ is solid if and only if it is solid relatively to $\mathbb{C}$. The following property justifies the terminology.

\begin{prop} \label{relative_absolute}
Let $P \subset N \subset M$ be inclusions of von Neumann algebras with expectations. If $M$ is solid relatively to $N$ and $N$ is solid relatively to $P$ then $M$ is solid relatively to $P$. In particular, if $M$ is solid relatively to $N$ and $N$ is solid then $M$ is solid.
\end{prop}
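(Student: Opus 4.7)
The plan is to assume $\mathcal{Z}(Q)$ is not atomic and derive a contradiction. Cutting $Q$ by a non-zero central projection $z\in\mathcal{Z}(Q)$ with $\mathcal{Z}(Q)z$ diffuse (the cut-down $Qz\subset zMz$ remains properly non-amenable with expectation and satisfies $Qz\nprec_M P$, since any witness for $Qz\prec_M P$ automatically lies in a corner $p\leq z$ and therefore also witnesses $Q\prec_M P$), I may assume $\mathcal{Z}(Q)$ is itself diffuse. The argument then splits according to whether $Q\prec_M N$.

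If $Q\nprec_M N$, solidity of $M$ relative to $N$ directly forces $\mathcal{Z}(Q)$ to be atomic, contradicting diffuseness. Otherwise $Q\prec_M N$, and by Remark \ref{semi-conj} I fix projections $p\in Q$, $q\in N$, a subalgebra with expectation $C\subset qNq$, a partial isometry $v$, and a $*$-isomorphism $\psi\colon pQp\to C$ with $pQp\sim_v C$. Since $Q$ is properly non-amenable so is $pQp$, and via $\psi$ so is $C$. The central claim is $C\nprec_N P$: granted this, solidity of $N$ relative to $P$ applied to $C$ yields $\mathcal{Z}(C)$, hence $\mathcal{Z}(pQp)$, atomic; but the subalgebra $p\mathcal{Z}(Q)\subset\mathcal{Z}(pQp)$ is isomorphic to the non-zero corner $\mathcal{Z}(Q)z(p)$ (with $z(p)\in\mathcal{Z}(Q)$ the central support of $p$) of the diffuse abelian algebra $\mathcal{Z}(Q)$, hence is diffuse, contradicting atomicity of $\mathcal{Z}(pQp)$.

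To prove the claim $C\nprec_N P$, suppose instead $C\prec_N P$, hence $C\prec_M P$ since $N\subset M$ is with expectation. Fix a witness $(p',q',D,h',\phi)$: $p'\in C$, $q'\in P$, $D\subset q'Pq'$ with expectation, $h'\in q'Mp'$, $\phi\colon p'Cp'\to D$ a $*$-isomorphism, and $p'Cp'\sim_{h'}D$. Set $e:=\psi^{-1}(p')\in pQp$ and $\tilde h:=h'v$. Combining $\psi(x)v=vx$ and $\phi(y)h'=h'y$ gives, for every $x\in eQe$,
\[
\tilde h\,x \;=\; h'\,v\,x \;=\; h'\,\psi(x)\,v \;=\; \phi(\psi(x))\,h'\,v \;=\; (\phi\circ\psi)(x)\,\tilde h.
\]
Provided $\tilde h\ne 0$, the remark following Definition \ref{def_semi_conj} supplies a non-zero central projection $p_0\in\mathcal{Z}(eQe)$ and a corner $D_0\subset D$ with expectation such that $p_0Qp_0\sim_{\tilde h}D_0$, witnessing $Q\prec_M P$ and contradicting the standing hypothesis.

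The main obstacle is ensuring $\tilde h=h'v\ne 0$: Remark \ref{semi-conj} explicitly warns that $\sim_M$ is not transitive for exactly this reason. I would handle it by exploiting the freedom in the witness $(p',q',h',D)$ for $C\prec_M P$. A convenient reduction is first to shrink $p$ and replace $C$ by $Cvv^*$ using the central supports of $v^*v\in(pQp)'\cap pMp$ and $vv^*\in C'\cap qMq$, putting us in the situation $v^*v=p$, $vv^*=q$; then $\Ad(v)$ is a genuine $*$-isomorphism $pMp\to qMq$ sending $pQp$ onto $C$, and $C\prec_M P$ transports along $\Ad(v^*)$ directly into $pQp\prec_M P$. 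Alternatively, applying Theorem \ref{criterion_intertwine} to a finite corner of $C$ provides a unitary net in $\mathcal{U}(p'Cp')$ witnessing $C\nprec_M P$, which can then be transported to $pQp$ via the identity $u_iv=v\psi^{-1}(u_i)$ to refute $pQp\prec_M P$ directly.
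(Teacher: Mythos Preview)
Your overall strategy matches the paper's: from $Q\prec_M N$ obtain an isomorphic copy of a corner of $Q$ inside $N$, apply relative solidity of $N$ to $P$, then compose the two intertwiners. You have also correctly identified the crux of the argument, namely the non-transitivity warned about in Remark~\ref{semi-conj}: the composed intertwiner $\tilde h = h'v$ may vanish.

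However, neither of your proposed fixes actually resolves this. For fix~1, the projection $vv^*$ lies only in $C'\cap qMq$, not in $N$; replacing $C$ by $Cvv^*$ therefore produces an algebra sitting in $(vv^*)M(vv^*)$ rather than in a corner of $N$, so you can no longer invoke solidity of $N$ relative to $P$ for it. Even if you try to use the genuine conjugacy $\Ad(v)$ only at the transport stage, you would need $C(vv^*)\prec_M P$, and this does \emph{not} follow from $C\prec_M P$: the map $C\to C(vv^*)$ is a quotient, not an inclusion, so Proposition~\ref{sub_embed} goes the wrong way (the corner $p'\in C$ witnessing $C\prec_M P$ may satisfy $p'(vv^*)=0$). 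For fix~2, transporting a net $(w_i)\subset\mathcal U(eQe)$ to $u_i=\psi(w_i)$ via $u_iv=vw_i$ only yields $E_P\bigl(x\,u_i\,(vv^*)\,y^*\bigr)\to 0$; the piece $E_P\bigl(x\,u_i\,(1_C-vv^*)\,y^*\bigr)$ remains uncontrolled, so you cannot conclude $p'Cp'\nprec_M P$.

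The paper closes this gap with one extra idea. Let $q$ be the smallest projection in $N$ dominating $vv^*$, characterized by $yv=0\Leftrightarrow yq=0$ for all $y\in N$. One checks $q\in C'\cap 1_C N 1_C$, so $Cq\subset qNq$ is with expectation, and $x\mapsto\psi(x)q$ is still a $*$-isomorphism $pQp\to Cq$ (injective because $\psi(x)q=0\Rightarrow\psi(x)v=0\Rightarrow vx=0\Rightarrow x=0$). Now apply relative solidity of $N$ to $P$ to $Cq$ (properly non-amenable with diffuse center), obtaining a witness $w\in N$ for $Cq\prec_N P$. Because $w\in N$, the defining property of $q$ gives $\alpha(x)wv=0\Rightarrow \alpha(x)wq=0\Rightarrow \alpha(x)w=0$, whence $x=0$; so $wv$ is a genuine non-zero intertwiner and $Q\prec_M P$ follows. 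The missing ingredient in your argument is precisely this passage from $vv^*$ to its $N$-support before applying solidity inside $N$.
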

\begin{proof} Take a properly non-amenable subalgebra with expectation $Q \subset 1_QM1_Q$ such that $Q$ has diffuse center. We have to show that $Q \prec_M P$. Since $M$ is solid relatively to $N$, we already know that $Q \prec_M N$. Hence there exist a non-zero projection $p \in Q$, a subalgebra with expectation $C \subset 1_CN1_C$ and a partial isometry $v \in 1_CMp$ such that $pQp \sim_v C$. Let $\psi: pQp \rightarrow C$ be the $*$-isomorphism by $v$. Let $q$ be the smallest projection in $N$ which is greater then $vv^*$ so that we have $yv=0 \Leftrightarrow yq=0$ for all $y \in N$. Since $vv^* \in C' \cap 1_CM1_C$, we have $q \in C' \cap 1_CN1_C$. Hence, we have a  $*$-morphism $\phi: pQp \rightarrow Cq$ defined by $\phi(x)=\psi(x)q$. In fact, it is a $*$-isomorphism because $\psi(x)q=0 \Leftrightarrow \psi(x)v=0 \Leftrightarrow vx=0 \Leftrightarrow x=0$. Now $Cq \subset qNq$ is a sublagebra with expectation which is properly non-amenable and it has diffuse center (because it is isomorphic to $pQp$). Since $N$ is solid relatively to $P$, this implies that $Cq \prec_N P$. Hence there exists a subalgebra with expectation $D \subset 1_DP1_D$, a non-zero projection $r \in Cq$ and a partial isometry $w \in 1_DNr$ such that $rCr \sim_w D$. Let $\theta: rCr \rightarrow D$ be the $*$-isomorphism implemented by $w$. Let $p'=\phi^{-1}(r)$, let $\phi': p'Qp' \rightarrow rCr$ be the $*$-isomorphism obtained by restriction of $\phi$ to $p'Qp'$ and let $h:=wv$. Then we have a $*$-isomorphism $\alpha:=\theta \circ \phi': p'Qp' \rightarrow D$ and for all $x \in p'Qp'$ we have 
\[ \alpha(x)h=\theta(\phi'(x))wv=w\phi'(x)v=wvx=hx \]
Moreover, if $hx=0$ then $\alpha(x)wv=0$ which means that $\alpha(x)w=\alpha(x)wq=0$ by definition of $q$. This implies that $\theta(\phi'(x))=0$ by definition of $w$ and hence $x=0$. Thus we proved that $p'Qp' \sim_h D$ which means that $Q \prec_M P$ as we wanted.
\end{proof}

Next we present other possible formulations of relative solidity.

\begin{prop} \label{formulation}
Let $M$ be a von Neumann algebra and $N \subset M$ a subalgebra with expectation. The following are equivalent:
\begin{enumerate}
	\item $M$ is solid relatively to $N$.
	\item For every diffuse subalgebra with expectation $Q \subset 1_QM1_Q$ such that $Q' \cap 1_QM1_Q$ is non-amenable we have $Q' \cap 1_QM1_Q \prec_M N$.
	\item For every non-amenable subalgebra with expectation $Q \subset 1_QM1_Q$ such that $Q' \cap 1_QM1_Q$ is diffuse we have $Q \prec_M N$.
\end{enumerate}
\end{prop}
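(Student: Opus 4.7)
The plan is to establish the equivalences (1)$\Leftrightarrow$(2) and (1)$\Leftrightarrow$(3) separately. I would begin by recording a convenient reformulation of (1): condition (1) is equivalent to the statement that every \emph{properly non-amenable} subalgebra with expectation $Q \subset 1_QM1_Q$ having \emph{diffuse} center satisfies $Q \prec_M N$. This follows from the contrapositive of (1) together with a cut by a central projection $z \in Z(Q)$ such that $Z(Q)z$ is diffuse (which exists whenever $Z(Q)$ is not atomic) and the fact that $Qz \prec_M N$ implies $Q \prec_M N$. In what follows I write $M_1 := 1_QM1_Q$.

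For the directions (2)$\Rightarrow$(1) and (3)$\Rightarrow$(1), I would feed a natural candidate into the hypothesis. Starting with a properly non-amenable $Q \subset M_1$ with $Z(Q)$ diffuse (via the reformulation) and $Q \nprec_M N$ for contradiction, in (2)$\Rightarrow$(1) I apply (2) to the diffuse abelian $Z(Q) \subset M_1$. A modular-theory argument via Takesaki's theorem, using the state $\psi \circ E_Q$ on $M_1$ for a faithful normal state $\psi$ on $Q$, shows that $Q$ sits with expectation in $Z(Q)' \cap M_1$, which is therefore non-amenable. Then (2) gives $Z(Q)' \cap M_1 \prec_M N$, and Proposition~\ref{sub_embed} yields $Q \prec_M N$, a contradiction. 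For (3)$\Rightarrow$(1), I would apply (3) directly to $Q$; the only point is that $Q' \cap M_1$ must be diffuse. Since $Z(Q) \subset Z(Q' \cap M_1)$ is diffuse, a minimal projection $p \in Q' \cap M_1$ would produce a normal character $\lambda$ of $Z(Q)$ via $zp = \lambda(z)p$, contradicting diffuseness of $Z(Q)$. Hence $Q' \cap M_1$ is diffuse and (3) yields $Q \prec_M N$.

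For the converse implications (1)$\Rightarrow$(2) and (1)$\Rightarrow$(3), both symmetric, the strategy is to construct an auxiliary algebra $S$ in a suitable corner that is properly non-amenable with diffuse center, to which the reformulation of (1) then applies. For (1)$\Rightarrow$(2), given diffuse $Q$ and non-amenable $R := Q' \cap M_1$, I would first cut $R$ to its properly non-amenable central summand $Rp_2$, where $p_2 \in Z(R) \subset R \subset Q'$. Next, pick a diffuse abelian subalgebra with expectation $A \subset Qp_2$ (note $Qp_2$ is a nonzero diffuse quotient of $Q$ since $p_2 \in Q'$), and set $S := A \vee Rp_2 \subset p_2 M_1 p_2$. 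Since $A \subset (Rp_2)'$ and $A$ is abelian, $A \subset Z(S)$ and so $Z(S)$ is diffuse. To see that $S$ is properly non-amenable, fix any nonzero $z \in Z(S)$; then $z$ commutes with $Rp_2$, and the kernel of the homomorphism $r \mapsto rz$ on $Rp_2$ is of the form $Rp_2(1-z')$ for some nonzero $z' \in Z(Rp_2)$, giving $(Rp_2)z \cong (Rp_2)z'$, which is non-amenable by proper non-amenability of $Rp_2$. Since $(Rp_2)z \subset Sz$ with expectation (Lemma~\ref{expectation}), $Sz$ is non-amenable. The reformulation of (1) therefore yields $S \prec_M N$, and Proposition~\ref{sub_embed} gives $Rp_2 \prec_M N$, hence $R \prec_M N$. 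The argument for (1)$\Rightarrow$(3) is entirely symmetric: cut $Q$ to its properly non-amenable summand $Qp$ (with $p \in Z(Q) \subset R$, making $pRp$ diffuse), pick a diffuse abelian $A \subset pRp$ with expectation, and form $S = Qp \vee A$.

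The main obstacle I anticipate is verifying that the auxiliary algebra $S$ is properly non-amenable, since a central projection of $S$ generally fails to lie in the center of the noncommutative factor $Rp_2$ (or $Qp$); the ideal-theoretic argument above is the key point. A secondary but pervasive technical issue is ensuring that every constructed subalgebra is with expectation in its ambient algebra; this is handled throughout via Takesaki's theorem by selecting faithful normal states whose modular flow preserves all subalgebras at play, following the conventions of Section~2.
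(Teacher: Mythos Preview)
Your proposal is correct and rests on the same key construction as the paper: joining a diffuse abelian subalgebra with the relevant (relative) commutant to manufacture a test algebra with diffuse center. The organization, however, differs. The paper argues cyclically, $(1)\Rightarrow(2)\Rightarrow(3)\Rightarrow(1)$, and in particular proves $(1)\Rightarrow(2)$ by \emph{contraposition}: assuming $Q'\cap 1_QM1_Q\nprec_M N$, it forms $P=A\vee(Q'\cap 1_QM1_Q)$ with $A\subset Q$ diffuse abelian, observes that $P$ has diffuse center and $P\nprec_M N$, and concludes from (1) that $P$ (hence the commutant) is amenable. This sidesteps entirely the issue you flag as your ``main obstacle'': since one never assumes non-amenability of the commutant, there is no need to pass to a properly non-amenable summand nor to verify proper non-amenability of the auxiliary algebra $S$. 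Your direct route works, and your ideal-theoretic argument for proper non-amenability of $S$ is fine, but it is precisely the extra labor that the paper's contrapositive avoids. Likewise, the paper reaches (3) from (2) in one line (apply (2) to the diffuse algebra $Q'\cap 1_QM1_Q$ and use Proposition~\ref{sub_embed}), rather than repeating the construction symmetrically as you do for $(1)\Rightarrow(3)$. Your $(3)\Rightarrow(1)$ coincides with the paper's, and your $(2)\Rightarrow(1)$ via $Z(Q)$ is a pleasant shortcut the paper does not take.
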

\begin{proof}
$(1) \Rightarrow (2)$. Suppose that $Q' \cap 1_QM1_Q \nprec_M N$. Take $A \subset Q$ a diffuse abelian subalgebra with expectation (see \cite[Lemma 2.1]{HoudUedaAsympFree}). Then $P = A \vee (Q' \cap 1_QM1_Q)$ has diffuse center and $P \nprec_M N$ by Proposition \ref{sub_embed}. Hence $P$ is amenable by $(1)$. Therefore $Q' \cap 1_QM1_Q$ is also amenable.

$(2) \Rightarrow (3)$. Let $P = Q' \cap 1_QM1_Q$. Then $P$ is diffuse and since $Q$ is non-amenable and $Q \subset P' \cap 1_QM1_Q$ we have that $P' \cap 1_QM1_Q$ is non-amenable. Hence $P' \cap 1_QM1_Q \prec_M N$ by $(2)$. Therefore $Q \prec_M N$ by Proposition \ref{sub_embed}.

$(3) \Rightarrow (1)$. Let $Q \subset 1_QM1_Q$ be a properly non-amenable subalgebra with expectation such that $Q$ has diffuse center. Then $Q' \cap 1_QM1_Q$ is also diffuse. Hence by $(3)$, we have $Q \nprec_M N$. Therefore $M$ is solid relatively to $N$.
\end{proof}

 Note that \cite[Theorem 2]{ChifanIoanaBern} as well as \cite[Theorem B]{RemiGaussian} and \cite[Theorem C]{RemiGaussian} are examples of relative solidity results. Relative solidity can be useful to prove fullness or primeness in situations where true solidity fails. Recall that a factor $M$ is \emph{full} (\cite{ConnesAlmostPeriodic}) if for every bounded net $x_i \in M, i \in I$ such that $||[x_i,\varphi] || \rightarrow 0$ for all $\varphi \in M_*$ there exists a net $\lambda_i \in \C, i \in I$ such that $x_i - \lambda_i \rightarrow 0$ in the $*$-strong topology. A factor $M$ is \emph{prime} if it is not of type $\mathrm{I}$ and $M \simeq P_1 \ovt P_2$ implies that $P_1$ or $P_2$ is of type $\mathrm{I}$.

\begin{prop}\label{prime_full}
Let $M$ be a von Neumann algebra and $N \subset M$ a subalgebra with expectation. Suppose that $M$ is solid relatively to $N$ and the inclusion $N \subset M$ is absorbing. Let $P \subset 1_PM1_P$ be any non-amenable factor with expectation such that $P \nprec_M N$. Then $P$ is prime. If $P$ has moreover a separable predual then it is full.
\end{prop}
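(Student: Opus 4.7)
The plan is to prove primeness first by a contradiction argument combining Proposition~\ref{formulation}(2) with Lemma~\ref{abs_com}. Suppose $P = P_1 \ovt P_2$ with both $P_1, P_2$ non-type~$\mathrm{I}$ factors, so both are diffuse. Since $P$ is non-amenable, at least one of them, say $P_1$, is non-amenable. Apply Proposition~\ref{formulation}(2) to the diffuse subalgebra with expectation $Q := P_2 \subset 1_PM1_P$: since $Q' \cap 1_PM1_P \supset P_1$ is non-amenable, one obtains $P_2' \cap 1_PM1_P \prec_M N$, and then Proposition~\ref{sub_embed} gives $P_1 \prec_M N$. Since $P_1$ is diffuse and $N \subset M$ is absorbing, Lemma~\ref{abs_com} upgrades this to $B := P_1 \vee (P_1' \cap 1_PM1_P) \prec_M N$.

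To reach a contradiction with $P \nprec_M N$, the remaining (and main) step is to check that $P \subset B$ is a subalgebra with expectation, so that Proposition~\ref{sub_embed} yields $P \prec_M N$. The inclusion $P \subset B$ is clear since $P_2 \subset P_1' \cap 1_PM1_P$. For the expectation, I would choose a product faithful normal state $\varphi_1 \ovt \varphi_2$ on $P = P_1 \ovt P_2$ and extend it via $\widetilde\varphi := (\varphi_1 \ovt \varphi_2) \circ E_P$, where $E_P : 1_PM1_P \to P$ is the given expectation onto $P$. By Takesaki's theorem, $\sigma^{\widetilde\varphi}$ preserves $P$, and its restriction to $P$ is $\sigma^{\varphi_1} \ovt \sigma^{\varphi_2}$, which preserves $P_1$. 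Hence $\sigma^{\widetilde\varphi}$ preserves $P_1$, then $P_1' \cap 1_PM1_P$, and therefore $B$; inside $B$ it still preserves $P$, so Takesaki's theorem applied inside $B$ furnishes the required expectation $B \to P$.

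For fullness under the additional separability assumption, I would pass to the Ocneanu ultraproduct. By the Connes--Ando--Haagerup characterization, $P$ fails to be full if and only if the asymptotic centralizer $P_\omega \subset P^\omega$ is non-trivial for some free ultrafilter $\omega$. Assuming this, functional calculus produces a diffuse abelian subalgebra $Q \subset P_\omega$ lying in the centralizer of $\varphi^\omega$ for a fixed faithful normal state $\varphi$ on $P$; in particular $Q$ is a finite subalgebra with expectation of $M^\omega$. Since $Q \subset P' \cap P^\omega$, the relative commutant $Q' \cap M^\omega$ contains $P$ and is non-amenable. The desired contradiction would then come from the intertwining criterion (Theorem~\ref{criterion_intertwine}) giving $P \nprec_{M^\omega} N^\omega$ (by lifting unitaries from a finite corner of $P$ in $M$ to constant sequences in $M^\omega$), combined with an ultraproduct analogue of Proposition~\ref{formulation}(2) applied to $Q$ to obtain $Q' \cap M^\omega \prec_{M^\omega} N^\omega$ and thus $P \prec_{M^\omega} N^\omega$. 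The hardest step is transferring relative solidity to the ultraproduct setting, which will require a careful use of the intertwining machinery of Houdayer--Isono together with the absorption of $N$ in $M$.
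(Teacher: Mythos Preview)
Your primeness argument is essentially the same as the paper's (in fact you are more careful than the paper about the expectation from $B=P_1\vee(P_1'\cap 1_PM1_P)$ onto $P$, which the paper leaves implicit).

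The fullness argument, however, has a genuine gap. Your plan requires an ``ultraproduct analogue of Proposition~\ref{formulation}(2)'', i.e.\ that $M^\omega$ is solid relatively to $N^\omega$. You acknowledge this as the hardest step and do not prove it; there is no reason it should follow from relative solidity of $M$ with respect to $N$. Ultraproducts are far too large: $M^\omega$ typically contains many diffuse subalgebras with non-amenable commutant that have nothing to do with $N^\omega$ (for instance, two free copies of $\mathcal{L}(\mathbb{F}_2)$ inside $M^\omega$ arising from independent sequences would already be problematic). Likewise, absorption of $N^\omega$ in $M^\omega$ is not available. Without these, the argument does not close. A secondary issue: your step $P\nprec_M N\Rightarrow P\nprec_{M^\omega} N^\omega$ via Theorem~\ref{criterion_intertwine} cannot use a ``finite corner of $P$'' when $P$ is a type~$\mathrm{III}$ or $\mathrm{II}_\infty$ factor, since such $P$ has no nonzero finite projection; you would need instead a diffuse finite subalgebra of $P$ with expectation, and some care to transfer non-intertwining.

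The paper avoids the ultraproduct altogether. It uses \cite[Theorem~3.1]{HoudUedaFreeRig}: if $P$ is a non-full factor with separable predual, there is a decreasing sequence of diffuse abelian subalgebras with expectation $Q_i\subset P$ such that $P=\bigvee_i(Q_i'\cap P)$. One then argues inside $M$. If for some $i$ we had $Q_i'\cap 1_PM1_P\prec_M N$, then Lemma~\ref{abs_com} produces a partial isometry $v$ with $v(Q_i'\cap 1_PM1_P)v^*\subset fNf$; for each $j\geq i$ the subalgebra $vQ_jv^*\subset fNf$ is diffuse, so absorption gives $v(Q_j'\cap 1_PM1_P)v^*\subset fNf$, hence $vPv^*\subset fNf$ and $P\prec_M N$, a contradiction. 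Therefore $Q_i'\cap 1_PM1_P\nprec_M N$ for all $i$, and relative solidity (Proposition~\ref{formulation}(2)) forces each $Q_i'\cap 1_PM1_P$, hence each $Q_i'\cap P$, to be amenable. Then $P=\bigvee_i(Q_i'\cap P)$ is an increasing union of amenable subalgebras, so amenable --- the final contradiction. The key point is that the Houdayer--Ueda sequence lets you run the whole argument at the level of $M$, where relative solidity and absorption are actually known.
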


\begin{proof}
Suppose that $P=P_1 \ovt P_2$ where $P_1$ and $P_2$ are two diffuse factors. Since $P$ is non-amenable then one of them, say $P_1$, is non-amenable. Since $M$ is solid relatively to $N$, by Proposition \ref{formulation}, we must have $P_2 \prec_M N$. Since $N \subset M$ is absorbing, this implies that $P=P_2 \vee (P_2'\cap P) \prec_M N$ by Lemma \ref{abs_com}. Contradiction. 

Now we suppose that $P$ has separable predual and we show that $P$ is full. On the contrary, suppose that $P$ is not full. Then, by \cite[Theorem 3.1]{HoudUedaFreeRig}, there exists a decreasing sequence of diffuse abelian subalgebras $Q_i \subset P$ with expectation such that $P = \bigvee_{i \in \mathbb{N}} (Q_i' \cap P)$. Suppose that for some $i$, we have $Q_i'\cap 1_PM1_P \prec_M N$. Note that $Q_i \subset Q_i'\cap 1_PM1_P$. So by Lemma \ref{abs_com}, we know that there exists a non-zero partial isometry $v \in M$ such that $e=v^*v \in Q_i' \cap 1_PM1_P$, $f=vv^* \in N$ and $v(Q_i' \cap qMq)v^* \subset fNf$ with expectation. Note that for all $j \geq i$, we have $Q_j \subset Q_i \subset Q_i' \cap 1_PM1_P$ with expectation and $Q_j$ is diffuse. Hence, since $N$ is absorbing we have $v(Q'_j \cap 1_PM1_P)v^* \subset (vQ_jv^*)' \cap fMf \subset fNf$. Therefore for all $j \geq i$, we have $v(Q'_j \cap 1_PM1_P)v^* \subset fNf$. Thus $v(\bigvee_{i \in \mathbb{N}} (Q_i' \cap 1_PM1_P))v^* \subset fNf$. In particular $\bigvee_{i \in \mathbb{N}} (Q_i' \cap 1_PM1_P) \prec_M N$. Since $P \subset \bigvee_{i \in \mathbb{N}} (Q_i' \cap 1_PM1_P)$ with expectation we get $P \prec_M N$ by Proposition \ref{sub_embed}. But this is not possible by assumption on $P$. Hence we must have $Q_i' \cap 1_PM1_P \nprec_M N$ for all $i$. By relative solidity and using Proposition \ref{formulation}, this implies that $Q_i' \cap 1_PM1_P$ is amenable for all $i$. In particular, $Q_i' \cap P$ is amenable for all $i$. Hence $P=\bigvee_{i \in \mathbb{N}} (Q_i' \cap P)$ is also amenable. From this contradiction we conclude that $P$ is full.
\end{proof}

\section{Spectral gap rigidity for non-tracial von Neumann algebras}
In this section, we prove an abstract non-tracial version of Popa's spectral gap rigidity principle \cite[Lemma 5.1]{PopaSpectralGap} and \cite[Lemma 5.2]{PopaSpectralGap}. The idea is that in the presence of a \emph{spectral gap} property, a subalgebra with properly non-amenable commutant will behave as a \emph{rigid} subalgebra, making it easy to locate. This principle will be used to prove Theorem \ref{solid} in the next section. 

\subsection*{Bimodules and the spectral gap property}
Let $M$ be a von Neumann algebra. As usual, an \emph{$M$-$M$-bimodule} is a pair $(H,\pi_H)$ where $H$ is a Hilbert space and $\pi_H: M \otimes_{alg} M^{op} \rightarrow B(H)$ is a $*$-representation which is \emph{binormal} (i.e.\ the restrictions of $\pi_H$ to $M$ and $M^{op}$ are normal). Recall that if $H,K$ are $M$-$M$-bimodules with bimodule representations $\pi_H: M \otimes_{alg} M^{op} \rightarrow B(H)$ and $\pi_K: M \otimes_{alg} M^{op} \rightarrow B(K)$ then $K$ is \emph{weakly contained} in $H$ if and only if there exists a unital completely positive map $\Phi: B(H) \rightarrow B(K)$ such that $\Phi \circ \pi_H = \pi_K$ (Note that $\Phi$ will restrict to a morphism from the $C^*$-algebra generated by $\pi_H(M \otimes_{alg} M^{op})$ to the $C^*$-algebra generated by $\pi_K(M \otimes_{alg} M^{op})$, and conversely, such a morphism can be extended to a u.c.p map from $B(H)$ to $B(K)$ by Arveson's theorem). Recall also that a von Neumann algebra $M$ is amenable if and only if the \emph{identity} $M$-$M$-bimodule $L^2(M)$ is weakly contained in the \emph{coarse} $M$-$M$-bimodule $L^2(M) \ovt L^2(M)$. See \cite[Appendix B]{NCG} for more information on bimodules.

Let $M \subset N$ be an inclusion of von Neumann algebras with expectation. Then any choice of a faithful normal conditional expectation $E_M: N \rightarrow M$ gives rise to an inclusions of $M$-$M$-bimodules $L^2(M) \subset L^2(N)$. We say that the inclusion $M \subset N$ is \emph{coarse}\footnote{We thank R. Boutonnet for suggesting this name.} if, for some choice of a faithful normal conditional expectation $E_M: N \rightarrow M$, the $M$-$M$-bimodule 
\[ L^2(N) \ominus L^2(M)=\{ \xi \in L^2(N) \mid \xi \perp L^2(M) \} \] is weakly contained in the coarse $M$-$M$-bimodule $L^2(M) \ovt L^2(M)$.  

The following lemma is an abstract non-tracial version of an argument used in \cite[Lemma 5.1]{PopaSpectralGap}. See also \cite[Theorem 4.1]{HoudIsoStrongErg}.

\begin{lem}[Spectral gap] \label{ultra_gap}
Let $M \subset N$ be a coarse inclusion. Let $P \subset M$ be a subalgebra with expectation and suppose that $P$ is properly non-amenable. Let $\omega$ be any free ultrafilter on $\mathbb{N}$. Then we have $P' \cap N^\omega \subset M^\omega$.
\end{lem}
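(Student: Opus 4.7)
My plan is to transport Popa's spectral gap argument to the non-tracial ultraproduct setting. Given $x \in P' \cap N^\omega$, I set $y := E_{M^\omega}(x)$, where $E_{M^\omega}: N^\omega \to M^\omega$ is the canonical ultrapower of a faithful normal conditional expectation $E_M: N \to M$ realizing the coarseness. Because $E_{M^\omega}$ is $M$-bimodular and $P \subset M$, the image $y$ lies in $P' \cap M^\omega$; hence $z := x - y$ also lies in $P' \cap N^\omega$, satisfies $E_{M^\omega}(z) = 0$, and proving $z = 0$ will suffice.

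Fix a faithful normal state $\varphi$ on $M$ and set $\tilde\varphi := \varphi \circ E_M$ on $N$, with its ultrapower state $\tilde\varphi^\omega := \tilde\varphi \circ E^\omega$. After replacing each $z_n$ by $z_n - E_M(z_n)$, I may lift $z = (z_n)^\omega$ with $E_M(z_n) = 0$. The vectors $\xi_n := z_n\tilde\varphi^{1/2}$ and $\eta_n := \tilde\varphi^{1/2}z_n$ are then both orthogonal to $L^2(M)$, so they lie in the $M$-$M$-subbimodule $H := L^2(N) \ominus L^2(M)$, which by hypothesis is weakly contained in the coarse $M$-$M$-bimodule $L^2(M) \ovt L^2(M)$. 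Because $P$ is properly non-amenable, the coarse $P$-$P$-bimodule (and any $M$-$M$-bimodule weakly contained in it, viewed as a $P$-$P$-bimodule) admits no almost $P$-central unit vectors; equivalently, there exist unitaries $u_1, \ldots, u_k \in \mathcal{U}(P)$ and $\delta > 0$ such that
\[ \|\zeta\|^2 \leq \delta \sum_{i=1}^{k} \|u_i\zeta - \zeta u_i\|^2 \qquad \text{for every } \zeta \in H \oplus H. \]
Applying this to $\zeta_n := (\xi_n, \eta_n)$ and invoking Tomita-Takesaki theory to reduce the bimodule commutators to the algebraic ones $[u_i, z_n]$, together with the fact that $[u_i, z_n] \to 0$ $*$-strongly along $\omega$ (since $z$ commutes with each $u_i$ in $N^\omega$), I obtain $\|\xi_n\| + \|\eta_n\| \to 0$ along $\omega$. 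Therefore $\tilde\varphi^\omega(z^*z) = 0$, and $z = 0$ by faithfulness of $\tilde\varphi^\omega$.

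\textbf{Main obstacle.} The essential technical difficulty lies in handling the modular flow of $\tilde\varphi$ when passing between the bimodule and algebraic pictures. The right $M$-action on $L^2(N)$ is not plain right multiplication: for an analytic unitary $u \in M$, one has $z_n\tilde\varphi^{1/2} \cdot u = z_n\sigma_{-i/2}^{\tilde\varphi}(u)\tilde\varphi^{1/2}$, which involves a modular correction that is invisible in the tracial setting. The whole point of tracking both $z_n\tilde\varphi^{1/2}$ and $\tilde\varphi^{1/2}z_n$ simultaneously -- equivalently, of working with the symmetric sharp norm $\|a\|_{\tilde\varphi}^\# := (\|a\tilde\varphi^{1/2}\|^2 + \|\tilde\varphi^{1/2}a\|^2)^{1/2}$ -- is precisely to neutralize this modular twist and translate the bimodule spectral gap inequality into a statement directly controllable by the $*$-strong vanishing of $[u_i,z_n]$. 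Once this modular bookkeeping is in place the argument proceeds in formal parallel with Popa's original tracial proof.
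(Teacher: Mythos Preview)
Your reduction to $z=(z_n)^\omega\in P'\cap N^\omega$ with $E_{M^\omega}(z)=0$ and $E_M(z_n)=0$ is correct, as is the observation that $\xi_n:=z_n\tilde\varphi^{1/2}$ and $\eta_n:=\tilde\varphi^{1/2}z_n$ lie in $H=L^2(N)\ominus L^2(M)$. The gap is precisely where you locate it, but it is not closed by the sharp--norm device. For an analytic unitary $u\in P$ one computes
\[
u\xi_n-\xi_n u \;=\; [u,z_n]\,\tilde\varphi^{1/2} \;+\; z_n\bigl(u-\sigma^{\tilde\varphi}_{-i/2}(u)\bigr)\tilde\varphi^{1/2},
\qquad
u\eta_n-\eta_n u \;=\; \tilde\varphi^{1/2}\bigl(\sigma^{\tilde\varphi}_{i/2}(u)-u\bigr)z_n \;+\; \tilde\varphi^{1/2}[u,z_n].
\]
The commutator terms vanish along $\omega$, but each modular correction term has norm of order $\|z_n\|\cdot\|(u-\sigma^{\tilde\varphi}_{-i/2}(u))\tilde\varphi^{1/2}\|$, a fixed nonzero quantity whenever $u\notin M^{\tilde\varphi}$; summing the two lines yields no cancellation. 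In fact, even if $z$ commuted \emph{exactly} with $P$ the vector $z\tilde\varphi^{1/2}$ would not be $P$-central in the bimodule sense unless $P\subset M^{\tilde\varphi}$, which one cannot arrange in general. So the spectral-gap inequality applied to $(\xi_n,\eta_n)$ does not force $\|\xi_n\|+\|\eta_n\|\to 0$, and the argument stalls.

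The paper sidesteps this obstruction by working at the operator level rather than with vectors. After normalizing so that $q:=E_M(Y^*Y)\in P'\cap M$ is a nonzero projection, it defines the completely positive map
\[
\Phi:B(H)\longrightarrow B(L^2(M)),\qquad \Phi(T)=\text{w*-}\lim_{n\to\omega}\,V^*y_n^*\,WTW^*\,y_nV,
\]
with $V:L^2(M)\hookrightarrow L^2(N)$ and $W:H\hookrightarrow L^2(N)$ the bimodule inclusions. One checks $\Phi(1)=q$ and that $\Phi$ intertwines the $P$-$P$-bimodule representations: for the \emph{right} action this is automatic, since $J_Nb^*J_N\in N'$ commutes with the left multiplications $y_n,y_n^*$ and with $e_M$; for the \emph{left} action one needs only $[a,y_n]\to 0$ $*$-strongly (exactly what $Y\in P'\cap N^\omega$ provides) together with uniform boundedness of the remaining factors in the weak* limit. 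This exhibits $qL^2(P)$ as weakly contained in $H$, hence in the coarse bimodule, so $qP$ is amenable --- contradicting proper non-amenability of $P$. The key difference is that the UCP-map formulation never requires any vector to be approximately bimodule-central; it uses only algebra-level asymptotic commutation of $y_n$ with $P$, where no modular twist appears.
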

\begin{proof}
Let $E_M: N \rightarrow M$ be a faithful normal conditional expectation as in the definition of a coarse inclusion. Let $E^{\omega}: N^\omega \rightarrow N$ the canonical conditional expectation and $E_{M^\omega}: N^\omega \rightarrow M^\omega$ the conditional expectation induced by $E_M$.
 
Now, suppose, by contradiction, that there is $Y \in  P' \cap N^\omega$ with $Y \neq 0$ and such that $E_{M^\omega}(Y)=0$. We have $E_M(Y^*Y) \in P' \cap M$. Let $c \in P'\cap M$ be an element such that $q=E_M(Y^*Y)^{\frac{1}{2}}c$ is a non-zero projection in $P' \cap M$.
Then $Yc \in P' \cap N^\omega$ and $E_{M^\omega}(Yc)=0$ and we have $E_M((Yc)^*(Yc))=q$. So, without loss of generality, we can directly suppose that $q=E_M(Y^*Y) \in P' \cap M$ is a non-zero projection. We will show that the $P$-$P$-bimodule $qL^2(M)$ is weakly contained in the $P$-$P$-bimodule $L^2(N) \ominus L^2(M)$. Let $V : L^{2}(M) \rightarrow L^{2}(N)$ and  $W : L^2(N) \ominus L^2(M) \rightarrow L^{2}(N)$ be the inclusion of bimodules. Note that $VV^{*}=1-WW^{*}=e_M$.  Pick a sequence $(y_n)_{n \in \mathbb{N}}$ representing $Y$ and define a completely positive map 
\[ \Phi: B(L^2(N) \ominus L^2(M)) \rightarrow B(L^2(M))\]
\[ T \mapsto \lim_{n \rightarrow \omega} \left( V^{*} y_n^* W T W^{*}y_n V \right) \text{ in the weak* topology.}\]
 We have 
\[ \Phi(1)=\lim_{n \rightarrow \omega} \left( V^{*} y_n^* (1-e_M) y_n V \right)=E_M(Y^{*}Y)-E_M(Y^{*})E_M(Y)=q \]
Hence $\Phi$ takes its values in $qB(L^{2}(M)q \simeq B(qL^{2}(M))$ which means that we can view $\Phi$ as a unital completely positive map from $B(L^2(N) \ominus L^2(M))$ to $B(qL^2(M))$.
And since $Y \in P'\cap N^\omega$ we see that $\Phi$ preserves the $P$-$P$-bimodule representations. Hence the $P$-$P$- bimodule $qL^2(M)$ is weakly contained in $L^2(N) \ominus L^2(M)$. Since $P$ is with expectation in $M$, we have an inclusion of $P$-$P$-bimodules $L^2(P) \subset L^2(M)$. Hence the $P$-$P$-bimodule $qL^2(P)$ is weakly contained in $L^2(N) \ominus L^2(M)$. By the coarse inclusion property, this implies in particular that $qL^2(P)$ is weakly contained in the coarse $P$-$P$-bimodule. We conclude easily that $qP$ is amenable and this contradicts the assumption that $P$ is properly non-amenable.    
\end{proof}

\subsection*{Malleable deformations and spectral gap rigidity}
Symmetric malleable deformations, or \emph{s-malleable} deformations, where introduced by S. Popa in \cite{PopaMalleable1} as a very useful tool for obtaining intertwining relations. We start by recalling this notion. Let $M$ be a von Neumann algebra. A \emph{malleable deformation} of $M$ is a pair $(\widetilde{M},\theta)$ where $M \subset \widetilde{M}$ is an inclusion with expectation and $\theta: \mathbb{R} \rightarrow \mathrm{Aut}(\widetilde{M})$ is a continuous action of $\mathbb{R}$. The deformation $(\widetilde{M},\theta)$ is said to be \emph{symmetric} if there exists $\beta \in \mathrm{Aut}(\widetilde{M})$ such that $\beta_{|M}=\mathrm{Id}$ and for all $t \in \mathbb{R}$, $\beta \circ \theta_t \circ \beta= \theta_{-t}$. We will say that a subalgebra $Q \subset M$ with expectation is \emph{rigid relatively to} the deformation $(\widetilde{M},\theta)$ if $\theta$ converges uniformly on the unit ball of $Q$: for every $*$-strong neighborhood $\mathcal{V}$ of $0$ in $\widetilde{M}$ there exists $t_0 > 0$ such that
\[ \forall t \in [-t_0,t_0], \forall x \in (Q)_1, \; \theta_t(x)-x \in \mathcal{V}. \]

Now we can state the main theorem of this section. It is an abstract non-tracial version of an argument due to S. Popa \cite[Lemma 5.2]{PopaSpectralGap}.  
\begin{theo} \label{def_rigidity}
Let $M$ be a von Neumann algebra, $(\widetilde{M},\theta)$ a symmetric malleable deformation of $M$ and $Q \subset M$ a subalgebra with expectation. Suppose that
\begin{itemize}
	\item The inclusion $M \subset \widetilde{M}$ is coarse.
	\item $Q$ is finite.
	\item $Q' \cap 1_QM1_Q$ is properly non-amenable.
\end{itemize}

Then $Q$ is rigid relatively to the deformation $(\widetilde{M},\theta)$ and $Q \prec_{\widetilde{M}} \theta_1(Q)$.
\end{theo}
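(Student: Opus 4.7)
The plan is to first establish the rigidity of $Q$ via Popa's spectral-gap principle adapted to the Ocneanu ultraproduct (the non-tracial analogue of \cite[Lemma 5.2]{PopaSpectralGap}), and then to deduce $Q \prec_{\widetilde M} \theta_1(Q)$ by combining rigidity with Theorem \ref{criterion_intertwine} and the symmetric involution $\beta$.

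For the spectral-gap input, observe that since $1_Q \in M$ is a projection and $M \subset \widetilde M$ is coarse, the cornered inclusion $1_Q M 1_Q \subset 1_Q \widetilde M 1_Q$ is also coarse (as $L^2(1_Q \widetilde M 1_Q)\ominus L^2(1_Q M 1_Q)$ is a sub-bimodule of $L^2(\widetilde M)\ominus L^2(M)$). Combined with the proper non-amenability of $P := Q' \cap 1_Q M 1_Q$, Lemma \ref{ultra_gap} yields
\[ P' \cap 1_Q \widetilde M^\omega 1_Q \;\subset\; 1_Q M^\omega 1_Q \]
for every free ultrafilter $\omega$.

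To prove rigidity, assume it fails. Fix a faithful normal state $\widetilde\varphi$ on $\widetilde M$ preserved by both $\theta$ and $\beta$ (implicit in the setup and automatic in the intended applications), and extract sequences $s_n \to 0$ and $u_n \in \mathcal U(Q)$ with $\|\theta_{s_n}(u_n) - u_n\|_{\widetilde \varphi}^\sharp \geq \epsilon$. Consider the element $X := (\theta_{s_n/2}(u_n) - u_n)^\omega \in 1_Q \widetilde M^\omega 1_Q$ (well-defined thanks to $\theta$-invariance of $\widetilde\varphi$ and finiteness of $Q$). For any $a \in P$, since $[u_n, a] = 0$ and $\theta$ is strongly continuous at $0$,
\[ [\theta_{s_n/2}(u_n), a] = \theta_{s_n/2}\bigl([u_n, \theta_{-s_n/2}(a) - a]\bigr) \xrightarrow[n \to \omega]{} 0 \]
in the $*$-strong topology, so $X \in P' \cap 1_Q \widetilde M^\omega 1_Q \subset 1_Q M^\omega 1_Q$. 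This forces $(\theta_{s_n/2}(u_n))^\omega \in M^\omega$, hence $\|\theta_{s_n/2}(u_n) - E_M \theta_{s_n/2}(u_n)\|_{\widetilde\varphi}^\sharp \to 0$ along $\omega$. Popa's transversality identity, a consequence of $\beta|_M = \mathrm{id}$, $\beta \theta_t \beta = \theta_{-t}$, and the fact that $\beta$ is a $\widetilde\varphi$-isometry (so $\mathrm{id} - \beta$ annihilates $L^2(M)$), reads
\[ \|\theta_t(u) - u\|_{\widetilde\varphi} = \|\theta_{t/2}(u) - \theta_{-t/2}(u)\|_{\widetilde\varphi} \leq 2\|\theta_{t/2}(u) - E_M \theta_{t/2}(u)\|_{\widetilde\varphi} \]
for $u \in M$. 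Applied at $t = s_n$, $u = u_n$, this contradicts the choice of the sequence.

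For the intertwining, rigidity provides $t_0 > 0$ with $\|\theta_{t_0}(u) - u\|_{\widetilde\varphi}^\sharp < 1/2$ for all $u \in (Q)_1$. If $Q \nprec_{\widetilde M} \theta_{t_0}(Q)$, Theorem \ref{criterion_intertwine} (applicable since $Q$ is finite) produces a net $v_i \in \mathcal U(Q)$ with $E_{\theta_{t_0}(Q)}(v_i) \to 0$ $*$-strongly; writing $\theta_{t_0}(v_i) = E_{\theta_{t_0}(Q)}(v_i) + E_{\theta_{t_0}(Q)}(\theta_{t_0}(v_i) - v_i)$ and using the $\widetilde\varphi$-contractivity of $E_{\theta_{t_0}(Q)}$ gives $\|E_{\theta_{t_0}(Q)}(v_i)\|_{\widetilde\varphi} > \|\theta_{t_0}(v_i)\|_{\widetilde\varphi} - 1/2 = 1/2$, a contradiction. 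Hence $Q \prec_{\widetilde M} \theta_{t_0}(Q)$. To upgrade $t_0$ to $1$, one invokes Popa's standard doubling procedure based on $\beta$: the involution reflects an intertwiner for $Q \prec \theta_{t_0}(Q)$ into an intertwiner for $Q \prec \theta_{-t_0}(Q)$, and composing yields $Q \prec \theta_{2t_0}(Q)$; iterating finitely often reaches $Q \prec_{\widetilde M} \theta_1(Q)$. The main technical obstacle is the non-tracial transversality, which depends on the delicate choice of a faithful normal state simultaneously preserved by $\theta$ and $\beta$---a condition that is automatic in the Bernoulli crossed product setting but must be tracked carefully in the abstract statement.
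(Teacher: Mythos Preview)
Your overall strategy matches the paper's: use Lemma \ref{ultra_gap} in the ultraproduct to get rigidity, then produce an intertwiner at small time and double. Two points, however, separate your argument from a complete proof of the abstract statement.

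\textbf{Rigidity step.} Your transversality estimate
\[
\|\theta_t(u)-u\|_{\widetilde\varphi}\le 2\|\theta_{t/2}(u)-E_M\theta_{t/2}(u)\|_{\widetilde\varphi}
\]
genuinely requires a faithful normal state $\widetilde\varphi$ preserved by both $\theta$ and $\beta$, and you correctly flag this as an extra hypothesis not present in the theorem. The paper avoids this entirely. It builds the automorphism $\Theta$ of $\widetilde M^\omega$ from $(\theta_{t_n/2})_n$, observes that $\Theta$ fixes $\widetilde M$ pointwise (so $\Theta(x)\in P'$ by your same commutator computation), uses Lemma \ref{ultra_gap} to get $\Theta(x)\in M^\omega$, and then simply notes that $\beta$ extends to $\widetilde M^\omega$, fixes $M^\omega$, and satisfies $\beta\Theta\beta=\Theta^{-1}$. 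From $x,\Theta(x)\in M^\omega$ one gets $\Theta^{-1}(x)=\beta\Theta\beta(x)=\beta\Theta(x)=\Theta(x)$, hence $\Theta^2(x)=x$, contradicting the choice of $(x_n,t_n)$. No invariant state is needed.

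\textbf{Doubling step.} This is where your argument has a real gap. You write that reflecting via $\beta$ and ``composing'' intertwiners yields $Q\prec\theta_{2t_0}(Q)$, but as stressed in Remark \ref{semi-conj}, neither $\sim_M$ nor $\prec_M$ is transitive: given a $\theta_t(Q)$-$Q$-intertwiner $w_t$, the obvious candidate $w_t\,\beta(w_t^*)$ for a $\theta_t(Q)$-$\theta_{-t}(Q)$-intertwiner may well be zero. The paper's Step 3 fixes this by inserting $d\in P=Q'\cap 1_QM1_Q$ and showing that if $w_t\,d\,\beta(w_t^*)=0$ for all $d\in P$, then the right-support projection $q$ of $\widetilde M w_t P$ satisfies $q\beta(q)=0$ and $q\in P'\cap 1_Q\widetilde M 1_Q$; a \emph{second} application of Lemma \ref{ultra_gap} forces $q\in M$, whence $q=\beta(q)$ and $q=0$, a contradiction. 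In other words, the spectral gap is used not only to obtain rigidity but again, crucially, to make the doubling go through. Your sketch omits this.

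Your use of Theorem \ref{criterion_intertwine} to pass from rigidity to $Q\prec_{\widetilde M}\theta_{t_0}(Q)$ is a legitimate alternative to the paper's convex-hull argument in Step 2 (modulo normalisation of $\widetilde\varphi$ on $1_Q$), but it does not help with the doubling issue above.
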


\begin{proof}

We proceed in 4 steps following the lines of Popa's original argument. In fact, only step 1 is different from the tracial case.

\emph{\textsc{Step 1} - The subalgebra $Q$ is rigid relatively to $(\widetilde{M},\theta)$.}

Suppose that $Q$ is not relatively rigid. Then we can find a $*$-strong neighborhood of $0$ in $\widetilde{M}$ denoted by $\mathcal{V}$, a sequence $x_n \in (Q)_1$ and a sequence of reals $t_n \rightarrow 0$ such that $\theta_{t_n}(x_n)-x_n \notin \mathcal{V}$ for all $n$. Let $\omega$ be any free ultrafilter on $\mathbb{N}$. Since $Q$ is finite the sequence $(x_n)_{n \in \mathbb{N}}$ defines an element $x$ in the ultraproduct $Q^\omega \subset M^\omega \subset \widetilde{M}^\omega$. Also, since $t_n \rightarrow 0$ we have $||\varphi \circ \theta_{t_n}  - \varphi || \rightarrow 0$ for all $\varphi \in \widetilde{M}_*$. Using this, we check that the automorphism of $\widetilde{M}$ defined by
\[\Theta((y_n)_n)=(\theta_{t_n/2}(y_n))_n \] preserves the ideal $\mathcal{I}^\omega(\widetilde{M})$. Hence $\Theta$ induces an automorphism of $\widetilde{M}^\omega$, still denoted $\Theta$, such that 
\[ \Theta((y_n)^\omega)=(\theta_{\frac{t_n}{2}}(y_n))^\omega. \]
Note that the choice of $x_n$ and $t_n$ we made implies that $\Theta^2(x) \neq x$. Now, observe that $\Theta(y)=y$ for all $y \in \widetilde{M}$ because $t_n \rightarrow 0$. In particular, if we let $P:=Q' \cap 1_QM1_Q$ then we have $\Theta(P)=P$ and therefore $\Theta(P' \cap 1_Q \widetilde{M}^\omega 1_Q)=P' \cap 1_Q \widetilde{M}^\omega 1_Q$. Since $x \in P' \cap 1_Q \widetilde{M}^\omega 1_Q$, we get $\Theta(x) \in P' \cap 1_Q \widetilde{M}^\omega 1_Q$. Lemma \ref{ultra_gap} applies to the coarse inclusion $1_QM1_Q \subset 1_Q \widetilde{M}1_Q$ and shows that
\[ P' \cap 1_Q \widetilde{M}^\omega 1_Q \subset 1_Q M^\omega 1_Q. \]
Therefore we get $\Theta(x) \in M^\omega$. Now, choose a symmetry $\beta$ for $(\widetilde{M},\theta)$ and extend it naturally to an automorphism $\beta \in \mathrm{Aut}(\widetilde{M}^\omega)$. Then $\beta$ fixes $M^\omega$ and we have $(\beta \circ \Theta \circ \beta)(x) = \Theta^{-1}(x)$. Since $x \in M^\omega$ and $\Theta(x) \in M^\omega$, we conclude that $\Theta(x)=\Theta^{-1}(x)$. And this contradicts the fact that $\Theta^2(x) \neq x$. Therefore $Q$ is rigid relatively to the deformation $(\widetilde{M},\theta)$.

\emph{\textsc{Step 2} - For sufficiently small $t$ there exists a non-zero $\theta_t(Q)$-$Q$-intertwiner.}

Since $Q$ is finite and with expectation, we can take $\psi$ a faithful normal state on $\widetilde{M}$ such that $Q$ is in the centralizer $\widetilde{M}^\psi$. By Step 1, $Q$ is rigid relatively to $(\widetilde{M},\theta)$. Hence we can find $t_0$ small enough so that for all $|t| \leq t_0$ we have
\[ \forall u \in \mathcal{U}(Q), \; \mathfrak{Re}( \psi(\theta_t(u)u^*)) \geq \frac{\psi(1_Q)}{2} > 0. \]
Now take $\mathcal{C} \subset (\widetilde{M})_1$ the weak$^*$ closed convex hull of $\{ \theta_t(u)u^* \mid u \in \mathcal{U}(Q) \}$ and let $w_t \in \mathcal{C}$ the unique element which minimizes $||w_t||_\psi$. Then, we have $w_t \in  \theta_t(1_Q)\widetilde{M}1_Q$ and by the above inequality $w_t \neq 0$. Since $Q \subset \widetilde{M}^\psi$ we have $||\theta_t(u)w_tu^*||_\psi = ||w_t||_\psi$ for all $u \in \mathcal{U}(Q)$. Therefore, by the uniqueness of $w_t$ we have
\[ \forall x \in Q, \; \theta_t(x)w_t = w_t x. \]
So $w_t$ is indeed a non-zero $\theta_t(Q)$-$Q$-intertwiner.

\emph{\textsc{Step 3} - If there exists a non-zero $\theta_t(Q)$-$Q$-intertwiner then there exists a non-zero $\theta_{2t}(Q)$-$Q$-intertwiner.}

Take a non-zero  $\theta_t(Q)$-$Q$-intertwiner $w_t \in \theta_t(1_Q)\widetilde{M}1_Q$ so that
\[ \forall x \in Q, \; \theta_t(x)w_t = w_t x.\]
Take a symmetry $\beta \in \mathrm{Aut}(\widetilde{M})$ for $(\widetilde{M},\theta)$. Let $P= Q' \cap 1_QM1_Q$. Note that for all $d \in P$, the element $w_t d \beta(w_t^*)$ is a $\theta_{t}(Q)$-$\theta_{-t}(Q)$-intertwiner. Indeed, for all $x \in Q$, we have
\[ \theta_{t}(x)w_t d \beta(w_t^*)=w_t d x\beta(w_t^*)=w_t d \beta(xw_t^*)=w_t d \beta(w_t^*\theta_t(x))=w_t d \beta(w_t^*)\theta_{-t}(x). \]
Hence, $\theta_t(w_t d \beta(w_t^*))$ is $\theta_{2t}(Q)$-$Q$-intertwiner. Therefore, we just need to find $d$ such that $w_t d \beta(w_t^*) \neq 0$. Suppose that $w_t d \beta(w_t^*) = 0$ for all $d \in P$. Let $q \in \widetilde{M}$ be the unique projection such that $\widetilde{M}q$ is the weak$^*$ closure of the left ideal $\widetilde{M}w_tP$. Then we have $q\beta(q)=0$. However, note that $q \in P' \cap 1_Q \widetilde{M}1_Q$ (because $\widetilde{M}q$ is invariant by the right multiplication by elements of $P$ so that $qx=qxq$ for all $x \in P$). Hence by Lemma \ref{ultra_gap}, we get that $q \in M$. Thus, we have $q=q\beta(q)=0$. Since $w_t \in \widetilde{M}q$, this contradicts the fact that $w_t \neq 0$ and we are done.

\emph{\textsc{Step 4} - Conclusion.}

Take $t=\frac{1}{2^n}$ sufficiently small and choose a non-zero $\theta_t(Q)$-$Q$-intertwiner. Then build recursively non-zero $\theta_{2^k t}(Q)$-$Q$-intertwiners until $k=n$. This gives a non-zero $\theta_1(Q)$-$Q$-intertwiner as we wanted.

\end{proof}

\section{Bernoulli crossed products}
In this section, we prove our main results using the spectral gap rigidity principle of the previous section. 

Fix $(A_0,\varphi_0)$ any von Neumann algebra with a faithful normal state $\varphi_0$. Let $(A,\varphi)=(A_0,\varphi_0)^{\ovt \Gamma}$ be the infinite tensor product indexed by $\Gamma$ where $\Gamma$ is any infinite discrete group.
Let $\sigma: \Gamma \rightarrow \mathrm{Aut}(A)$ be the \emph{Bernoulli action} of $\Gamma$ on $A$ obtained by shifting the tensors. The crossed product von Neumann algebra $M=A \rtimes_\sigma \Gamma$ is called the \emph{noncommutative Bernoulli crossed product} of $\Gamma$ with base $(A_0,\varphi_0)$. If $A_0 \neq \mathbb{C}$ and $\Gamma$ is infinite, it is well known that the Bernoulli action $\sigma$ is \emph{ergodic} (i.e.\ the fixed point algebra $A^\sigma$ is trivial) and \emph{properly outer} (i.e.\ if $\sigma_g(x)v=vx$ for all $x \in A$ and some non-zero $v \in A$ then $g=1$) and so, in this case, $M=A \rtimes_\sigma \Gamma$ is a factor (see \cite{Vaes2015296} for proofs of this facts). There is a canonical faithful normal conditional expectation $E_A: M \rightarrow A$ allowing us to extend $\varphi$ to a faithful normal state $\varphi$ on $M$ by the formula $\varphi \circ E_A = \varphi$. The action $\sigma$ preserves the state $\varphi$ so that $\mathcal{L}(\Gamma)$ is contained in the centralizer $M^\varphi$. An important fact for us is that the Bernoulli action is \emph{mixing}:
\[ \forall a,b \in A, \; \varphi(a\sigma_g(b)) \rightarrow \varphi(a)\varphi(b) \text{ when } g \rightarrow \infty. \]
Therefore, the inclusion $\mathcal{L}(\Gamma) \subset M$ is \emph{absorbing} (Example \ref{mixing}).

Now, following \cite{ChifanIoanaBern}, we define a symmetric malleable deformation of $M$. Let $(\widetilde{A}_0,\psi_0)=(A_0,\varphi_0) \ast ( \mathcal{L}(\mathbb{Z}),\tau)$ be the free product von Neumann algebra where $\tau$ is the Haar trace of $\mathcal{L}(\mathbb{Z})$. As before, let $(\widetilde{A},\psi) = (\widetilde{A}_0, \psi_0)^{\ovt \Gamma}$ be the infinite tensor product and $\widetilde{M}=\widetilde{A} \rtimes \Gamma$ the crossed product with respect to the Bernoulli action. The von Neumann algebra $\widetilde{M}$ contains $M$ with a normal conditional expectation $E_M: \widetilde{M} \rightarrow M$ such that $\psi =\varphi \circ E_M$. Now, take $v$ the canonical Haar unitary generating $\mathcal{L}(\mathbb{Z})$ and let $h \in \mathcal{L}(\Z)$ be a self-adjoint element such $v=e^{ih}$. For every $t \in \R$, let $v_t = e^{ith}$ and define an automorphism $\theta^0_t \in \mathrm{Aut}(\widetilde{A}_0)$ by $\theta^0_t(x)=v_txv_t^*$. Then $\theta^0_t$ induces an automorphism $\theta_t$ of $\widetilde{M}$ that fixes the elements of $\mathcal{L}(\Gamma)$ and preserves $\psi$. Moreover, the action  $\theta: t\mapsto \theta_t \in \mathrm{Aut}(\widetilde{M})$ is continuous. Let $\beta_0 \in \mathrm{Aut}(\widetilde{A}_0)$ be the automorphism defined by $\beta_0(a)=a$ for all $a \in A_0$ and $\beta_0(v)=v^*$. Then $\beta_0$ induces naturally an automorphism $\beta$ of $\widetilde{M}$ such that $\beta \circ \theta_t \circ \beta =\theta_{-t}$. Therefore $(\widetilde{M},\theta)$ is indeed a symmetric malleable deformation of $M$. In fact, it is malleable \emph{over} $\mathcal{L}(\Gamma)$ (see \cite[Section 6.2]{PopaSurvey}) meaning that we have the following \emph{commuting square relation} for the $\psi$-preserving conditional expectations:
\[ E_M \circ E_{\theta_1(M)} = E_{\theta_1(M)} \circ E_M = E_{\mathcal{L}(\Gamma)}. \]
This fact is important since in many cases it allows one to obtain an intertwining relation in $M$ from an intertwining relation in $\widetilde{M}$. In our specific case we get the following dichotomy which was obtained in the tracial case by S. Popa \cite[Theorem 4.1 and 4.4]{PopaMalleable1} and completed by A. Ioana \cite[Theorem 3.3 and 3.6]{IoanaWreath} (see also \cite[Theorem 7.3, step 3]{bourbakiHoud}). In order to generalize it to the non-tracial case we use Theorem \ref{criterion_intertwine}.

\begin{lem} \label{alternative}
Let $Q \subset 1_QM1_Q$ be a finite subalgebra with expectation. If $Q \prec_{\widetilde{M}} \theta_1(M)$ then one of the following holds:
\begin{itemize}
	\item $Q \prec_M \mathcal{L}(\Gamma)$.
	\item $Q \prec_M \ovt_F A_0$ for some finite subset $F \subset \Gamma$.
\end{itemize}
\end{lem}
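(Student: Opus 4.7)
The plan is to argue by contraposition: assume $Q \nprec_M \mathcal{L}(\Gamma)$ and $Q \nprec_M \ovt_F A_0$ for every finite $F \subset \Gamma$, and deduce that $Q \nprec_{\widetilde{M}} \theta_1(M)$. Applying Remark \ref{density} to the countable family $\{\mathcal{L}(\Gamma)\} \cup \{\ovt_{F_n} A_0 : n \geq 1\}$, where $(F_n)$ is an increasing sequence of finite subsets exhausting $\Gamma$, I obtain a single net of unitaries $(u_i) \subset \mathcal{U}(Q)$ such that, for every $n$ and every $x, y$ in a fixed $*$-strongly dense $*$-subalgebra of $1_Q M$, both $E_{\mathcal{L}(\Gamma)}(x u_i y^*) \to 0$ and $E_{\ovt_{F_n} A_0}(x u_i y^*) \to 0$ in the $*$-strong topology. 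By Theorem \ref{criterion_intertwine} applied inside $\widetilde{M}$, it will then suffice to verify that this very net satisfies $E_{\theta_1(M)}(\xi u_i \eta^*) \to 0$ $*$-strongly for every $\xi, \eta$ in a $*$-strongly dense subset of $1_Q \widetilde{M}$.

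A convenient dense family consists of elements of the form $\xi = w u_g$ with $g \in \Gamma$ and $w \in \widetilde{A}$ an elementary tensor whose coordinates are reduced words in the free product $\widetilde{A}_0 = A_0 \ast \mathcal{L}(\mathbb{Z})$ and which is supported on finitely many coordinates of $\Gamma$. Two cases arise. If no Haar unitary letter appears in $\xi$ or $\eta$, then $\xi, \eta \in M$ and $\xi u_i \eta^* \in M$; the commuting square relation $E_{\theta_1(M)} \circ E_M = E_{\mathcal{L}(\Gamma)}$ immediately gives $E_{\theta_1(M)}(\xi u_i \eta^*) = E_{\mathcal{L}(\Gamma)}(\xi u_i \eta^*) \to 0$ by construction of the net. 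Otherwise, at least one coordinate of the elementary tensor in $\xi$ or $\eta$ contains a non-trivial $v^{\pm k}$-letter. Using $\theta_1(A_0) = v A_0 v^*$ and the freeness of $v$ with $A_0$ in $\widetilde{A}_0$, I compute the $\psi$-preserving expectation $E_{\theta_1(M)}$ on reduced free-product words coordinate by coordinate, and express $E_{\theta_1(M)}(\xi u_i \eta^*)$ as a finite linear combination of terms of the form $c \cdot E_{\ovt_F A_0}(a u_i b^*) \cdot d$, where $F$ is the finite set of coordinates of $\Gamma$ appearing in $\xi$ and $\eta$, while $a, b \in M$ and $c, d \in \widetilde{M}$ depend only on $\xi, \eta$ and not on $i$. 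Each such term tends to $0$ $*$-strongly by the second property of the net, which concludes the proof modulo this computation.

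The main obstacle is precisely this combinatorial free-product computation in the second case: one must certify that the presence of a $v$-letter in some reduced word factor of $\xi$ or $\eta$ forces $E_{\theta_1(M)}(\xi u_i \eta^*)$ to factor through $E_{\ovt_F A_0}$ for the finite $F$ supporting $\xi, \eta$, rather than merely through $E_{\mathcal{L}(\Gamma)}$ as in the trivial case. This is the non-tracial analogue of the word computation from Popa's original proof and Ioana's refinement, and the passage from the tracial to the non-tracial setting is bookkeeping only: the free product state $\psi_0 = \varphi_0 \ast \tau$ already makes the moment calculations in $\widetilde{A}_0$ available, and the Bernoulli shift, the deformation $\theta_t$, and the symmetry $\beta$ all preserve $\psi$, so that all the expectations used in the argument can consistently be taken to be $\psi$-preserving.
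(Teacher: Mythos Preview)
Your strategy matches the paper's: contraposition, a single net of unitaries via Remark~\ref{density}, the commuting square $E_{\theta_1(M)}\circ E_M=E_{\mathcal{L}(\Gamma)}$ for the trivial case, and a free-product computation for the remaining case. Where your sketch is imprecise is in the execution of Case~2. The phrase ``coordinate by coordinate'' is misleading: $\theta_1(M)=\theta_1(A)\rtimes\Gamma$ carries the group action, so $E_{\theta_1(M)}$ is not a tensor-coordinate expectation. What the paper actually does is first reduce to $x,y\in\widetilde{A}$ (using $\mathcal{L}(\Gamma)\subset\theta_1(M)$), then expand $E_{\theta_1(M)}(xu_iy^*)$ in its $\Gamma$-Fourier coefficients inside $\theta_1(A)\rtimes\Gamma$, proving the identity
\[
E_{\theta_1(A)}\bigl(E_{\theta_1(M)}(xu_iy^*)\,u_g^*\bigr)
   = E_{\theta_1(A)}\bigl(x\,E_{\ovt_{K\cup gK'}A_0}(u_iu_g^*)\,\sigma_g(y^*)\bigr),
\]
where $K,K'$ are the tensor-supports of $x,y$. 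The finiteness of the resulting sum over $g\in\Gamma$ --- which you assert but do not argue --- is the actual content: the paper arranges by linearity that $x$ is \emph{orthogonal to} $\theta_1(A)A$ (note this is strictly stronger than ``some coordinate contains a $v$-letter'', since for instance $vav^*\in\theta_1(A)\subset\theta_1(A)A$), and then the $g$-th coefficient vanishes whenever the supports of $x$ and $\sigma_g(y)$ are disjoint, i.e.\ for all but finitely many $g$. Each of the finitely many surviving terms then tends to $0$ because $E_{\ovt_{K\cup gK'}A_0}(u_iu_g^*)\to 0$ by construction of the net. Your claimed shape $c\cdot E_{\ovt_F A_0}(au_ib^*)\cdot d$ is not quite what one obtains (there is an outer $E_{\theta_1(A)}$), but this does not affect the convergence, and you are correct that the passage from tracial to non-tracial is purely formal here, since all the expectations involved are $\psi$-preserving and the moment identities in $\widetilde{A}_0=A_0\ast\mathcal{L}(\Z)$ depend only on the free product structure.
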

\begin{proof}
Suppose that $Q \nprec_M \mathcal{L}(\Gamma)$ and $Q \nprec_M \ovt_F A_0$ for every finite subset $F \subset \Gamma$. Then by Theorem \ref{criterion_intertwine} and Remark \ref{density} we can find a net $u_i \in \mathcal{U}(Q)$ such that
\[ \forall x,y \in M, \; E_{\mathcal{L}(\Gamma)}(xu_iy^*)  \rightarrow 0 \]
\[ \forall F \subset \Gamma \text{ finite }, \forall x,y \in M, \; E_{\ovt_F A_0}(xu_iy^*) \rightarrow 0 \]
in the $*$-strong topology. We will contradict the assumption that $Q \prec_{\widetilde{M}} \theta_1(M)$ by showing that
\[ \forall x,y \in \widetilde{M}, \; E_{\theta_1(M)}(xu_iy^*) \rightarrow 0. \]
First, we can suppose that $x,y \in \widetilde{A}$ since $\mathcal{L}(\Gamma) \subset \theta_1(M)$ and $\widetilde{M}=\widetilde{A} \rtimes \Gamma$. By Remark \ref{density}, we can suppose that $x=\otimes_{g \in K} x_g$ and $y=\otimes_{g \in K'} y_g$ where $K,K' \subset \Gamma$ are finite subsets and $x_g,y_g \in \widetilde{A_0}$. Also the result is obvious if both $x,y \in \theta_1(A)A$ since $E_{\theta_1(M)} \circ E_M = E_{\mathcal{L}(\Gamma)}$. So we can suppose that $x$ is orthogonal to $\theta_1(A)A$ for example. Now, note that we have the following relation for all $g \in \Gamma$
\[ E_{\theta_1(A)}(E_{\theta_1(M)}(xu_iy^*)u_g^*)=E_{\theta_1(A)}(xE_{\ovt_{K \cup gK'} A_0}(u_i u_g^*)\sigma_g(y^*)). \]
This shows first that if $g$ is outside some finite set $F \subset \Gamma$, so that the support of $x$ and $\sigma_g(y)$ are disjoint, then 
\[ E_{\theta_1(A)}(E_{\theta_1(M)}(xu_iy^*)u_g^*)=0 \]
because $x$ is orthogonal to $\theta_1(A)A$. Hence we have a finite sum
\[ E_{\theta_1(M)}(xu_iy^*) = \sum_{g \in F} E_{\theta_1(A)}(E_{\theta_1(M)}(xu_iy^*)u_g^*) u_g \]
and each term of this sum converges to $0$ because $E_{\ovt_{K \cup gK'} A_0}(u_i u_g^*) \rightarrow 0$. Therefore we have $ E_{\theta_1(M)}(xu_iy^*) \rightarrow 0 $ for all $x,y \in \widetilde{M}$ as we wanted.
\end{proof}

In order to apply our deformation/rigidity principle, we still need to show, as in the original proof of Chifan and Ioana, that the inclusion $M \subset \tilde{M}$ is coarse, i.e.\ that the $M$-$M$-bimodule $L^{2}(\tilde{M}) \ominus L^{2}(M)$ is weakly contained in the coarse $M$-$M$-bimodule (see Section 4). 

\begin{theo} \label{bernoulli_gap}
Suppose that $A_0$ is amenable. Then the inclusion $M \subset \widetilde{M}$ is coarse.
\end{theo}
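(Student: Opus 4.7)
The strategy is to decompose the $M$-$M$-bimodule $L^2(\widetilde{M}) \ominus L^2(M)$ as a direct sum of subbimodules that are manifestly weakly contained in the coarse $M$-$M$-bimodule, with the amenability of $A_0$ entering only at the very end. At the single-site level, the free-product/reduced-word decomposition of $L^2(\widetilde{A}_0)$ arising from $\widetilde{A}_0 = A_0 \ast \mathcal{L}(\mathbb{Z})$ yields an explicit identification of $A_0$-$A_0$-bimodules
\[ H_0 := L^2(\widetilde{A}_0) \ominus L^2(A_0) \;\cong\; L^2(A_0) \otimes \mathcal{K}_0 \otimes L^2(A_0) \]
with $A_0$ acting by left/right multiplication on the two outer factors and $\mathcal{K}_0$ an auxiliary Hilbert space parametrizing the ``inner'' reduced words. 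In particular $H_0$ is a multiple of the coarse $A_0$-$A_0$-bimodule.

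Next I would globalize via the pointed infinite tensor product decomposition $L^2(\widetilde{A}) = \bigotimes_{g \in \Gamma}(L^2(\widetilde{A}_0), \psi_0^{1/2})$. Expanding each factor as $\mathbb{C} \psi_0^{1/2} \oplus L^2(A_0)^\circ \oplus H_0$ and regrouping, one gets the orthogonal decomposition
\[ L^2(\widetilde{A}) \ominus L^2(A) \;=\; \bigoplus_{\substack{T \subset \Gamma \\ \text{finite}, \; T \neq \emptyset}} \Big(\bigotimes_{g \in T} H_0\Big) \otimes L^2(A_{\Gamma \setminus T}) \]
as $A$-$A$-bimodules, where $A_{\Gamma \setminus T} := \bigotimes_{g \in \Gamma \setminus T} A_0$ acts on $L^2(A_{\Gamma \setminus T})$ as the identity bimodule. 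Combining with the first step, the $T$-summand is isomorphic as an $A$-$A$-bimodule to $L^2(A) \otimes_{A_{\Gamma \setminus T}} L^2(A) \otimes \mathcal{K}_T$, where $\mathcal{K}_T = \bigotimes_{g \in T} \mathcal{K}_0$. Tensoring on the right with $\ell^2(\Gamma)$ to promote this to the $M$-level, regrouping the summands according to $\Gamma$-orbits of finite subsets, and performing the corresponding induction from $\mathrm{Stab}_\Gamma(T_0)$ to $\Gamma$, each orbit $[T_0]$ contributes an $M$-$M$-subbimodule of $L^2(\widetilde{M}) \ominus L^2(M)$ isomorphic to a multiple of $L^2(M) \otimes_{B_{T_0}} L^2(M)$, where $B_{T_0} := A_{\Gamma \setminus T_0} \rtimes \mathrm{Stab}_\Gamma(T_0) \subset M$.

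Finally, since $T_0$ is finite the setwise stabilizer $\mathrm{Stab}_\Gamma(T_0)$ embeds in the symmetric group on $T_0$ and is therefore finite; together with the amenability of the infinite tensor product $A_{\Gamma \setminus T_0}$ (inherited from amenability of $A_0$), this shows $B_{T_0} \subset M$ is an amenable subalgebra with expectation. By the classical fact that for an amenable inclusion $B \subset M$ with expectation the $M$-$M$-bimodule $L^2(M) \otimes_B L^2(M)$ is weakly contained in the coarse $M$-$M$-bimodule, each orbit-summand is weakly contained in the coarse bimodule, and since weak containment in the coarse bimodule is preserved under direct sums, the conclusion follows. The main technical obstacle I anticipate is to verify that the $M$-$M$-bimodule structure on each orbit-summand really matches the claimed $L^2(M) \otimes_{B_{T_0}} L^2(M) \otimes \mathcal{K}_{T_0}$ identification — this is bookkeeping combining the Bernoulli shift, the crossed product identification $L^2(M) = L^2(A) \otimes \ell^2(\Gamma)$, and the induction of bimodules from $\mathrm{Stab}_\Gamma(T_0)$ up to $\Gamma$.
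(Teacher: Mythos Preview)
Your proposal is correct and follows essentially the same route as the paper's proof, which is itself the non-tracial adaptation of Chifan--Ioana's bimodule computation: free-product reduced-word decomposition at each site, infinite tensor product, grouping by $\Gamma$-orbits, and identifying each orbit piece with $L^2(\langle M,K\rangle)\cong L^2(M)\otimes_K L^2(M)$ for an amenable $K$. The only substantive difference is granularity: the paper groups by $\Gamma$-orbits of the \emph{elements} $\tilde a=\otimes_g\tilde a_g$ (so the relevant finite group is the stabilizer $S$ of $\tilde a$, and $K=A_0^{\ovt\Gamma\setminus F}\rtimes S$), whereas you group by $\Gamma$-orbits of the \emph{supports} $T$. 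Your coarser grouping is fine for the conclusion, but your claimed identification of the $[T_0]$-summand as a \emph{multiple} of $L^2(M)\otimes_{B_{T_0}}L^2(M)$ is not quite literal: $\mathrm{Stab}_\Gamma(T_0)$ acts nontrivially on $\mathcal K_{T_0}=\bigotimes_{g\in T_0}\mathcal K_0$ by permuting the tensor factors, so the multiplicity space is twisted. This is harmless---either refine to the paper's element-orbit decomposition, or observe that in any case the summand sits inside a multiple of $L^2(M)\otimes_{A_{\Gamma\setminus T_0}}L^2(M)$ with $A_{\Gamma\setminus T_0}$ amenable---but it is precisely the ``bookkeeping'' you flagged, and it does need the extra sentence.
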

\begin{proof}
We will compute the bimodule $L^2(\widetilde{M})\ominus L^2(M)$ following \cite[Lemma 5]{ChifanIoanaBern}. The computations still hold even though $\psi$ is not a trace. We will use the bimodule notation $x \xi y \in L^2(M)$ for $x,y \in M$ and $\xi \in L^2(M)$. We will denote by $\psi^{\frac{1}{2}}$ the unique cyclic vector in $L^2(M)$ such that $\langle x \psi^{\frac{1}{2}}, \psi^{\frac{1}{2}} \rangle=\psi(x)$ for all $x \in M$.

Let $\mathcal{A}_0 \subseteq A_0$ be a $\varphi_0$-orthonormal base of $A_0$ with $1 \in \mathcal{A}_0$. In $\widetilde{A}_0=A_0 \ast \mathcal{L}(\mathbb{Z})$ consider the set
\[ \widetilde{\mathcal{A}}_0 = \{ v^{n_0}a_1 \cdots v^{n_{k-1}}a_k v^{n_k} \mid k \geq 0, \quad n_i \in \mathbb{Z} \setminus 0, \quad a_i \in \mathcal{A}_0 \setminus 1 \}. \]  

Then it is easy to check that the subspaces $A_0 \tilde{a}_0 A_0$ are pairwise $\psi_0$-orthogonal for $\tilde{a}_0 \in \widetilde{\mathcal{A}}_0$. Thus, we have
\[ L^2(\widetilde{A}_0) \ominus L^2(A_0) = \bigoplus_{\tilde{a}_0 \in \widetilde{\mathcal{A}}_0} \overline{A_0 \tilde{a}_0 A_0 \psi_0^{\frac{1}{2}}}. \]

Now, let $\widetilde{\mathcal{A}}$ be the set of elements of $\widetilde{A}$ of the form
\[ \tilde{a}=\otimes_{g \in \Gamma} \tilde{a}_g \]
where $\tilde{a}_g \in \widetilde{\mathcal{A}}_0$ for finitely many (and at least one) $g$ and $\tilde{a}_g=1$ otherwise. Then we have  
\[  L^2(\widetilde{A}) \ominus L^2(A) = \bigoplus_{ \tilde{a} \in \widetilde{\mathcal{A}}} \overline{A \tilde{a} A\psi^{\frac{1}{2}}}. \]

Now, focus on the $M$-$M$-bimodules $H_{\tilde{a}}=\overline{M\tilde{a}M\psi^{\frac{1}{2}}} \subset L^2(\widetilde{M})$ for $\tilde{a} \in \widetilde{\mathcal{A}}$. We note that $H_{\tilde{a}}=H_{\sigma_g(\tilde{a})}$ for all $g \in \Gamma$ while $H_{\tilde{a}}$ and $H_{\tilde{a}'}$ are orthogonal when $\tilde{a}$ and $\tilde{a'}$ are not in the same $\Gamma$-orbit. So let $\Omega$ be the set of $\Gamma$-orbits of $\widetilde{\mathcal{A}}$ and for every $\pi \in \Omega$ define 
\[ H_\pi:= H_{\tilde{a}} \]
where $\tilde{a}$ is any element of the orbit $\pi$. Then we have an $M$-$M$-bimodules decomposition
\[ L^2(\widetilde{M}) \ominus L^2(M) = \bigoplus_{\pi \in \Omega} H_\pi .\] 
In order to conclude, it suffices to show that for each $\pi \in \Omega$, $H_\pi$ is weakly contained in $L^2(M) \ovt L^2(M)$. So let $\pi$ be such an orbit, represented by $\tilde{a} \in \widetilde{\mathcal{A}}$. Write $\tilde{a}=\otimes_{g \in \Gamma}\tilde{a}_g$ and let $F$ the non-empty finite set of elements $g \in \Gamma$ such that $\tilde{a}_g \neq 1$. The stabilizer of $\tilde{a}$ inside $\Gamma$ is denoted by
\[ S = \{ g \in \Gamma \mid \sigma_g(\tilde{a})=\tilde{a} \}. \]
It is a finite subgroup since it must leave the support $F$ invariant. Now let 
\[ K = A_0^{\ovt \Gamma \setminus F} \rtimes S \subset M. \]
The von Neumann algebra $K$ is globally invariant by the modular flow of $\varphi$. So there exists a unique normal conditional expectation $E_{K}$ from $M$ to $K$ that preserves $\varphi$. We will show that there is an isomorphism of $M$-$M$-bimodules
\[ H_\pi=H_{\tilde{a}} \simeq L^2(\langle M, K \rangle) \]
where $\langle M, K \rangle=(J_M K J_M)' \subset B(L^2(M))$ is the basic construction. Let $e_{K} \in \langle M, K \rangle$ be the Jones projection associated to $E_K$. It is known that the $*$-subalgebra $Me_{K}M$ is dense in $\langle M, K \rangle$. Let $\hat{\varphi}$ be the unique normal faithful semi-finite weight on $\langle M, K \rangle$ which satisfies
\[ \forall x,y \in M, \hat{\varphi}(xe_{K}y)=\varphi(xy). \]
We have
\[ L^2(\langle M, K \rangle) = \overline{Me_{K}M \hat{\varphi}^{\frac{1}{2}}}. \]
Denote by $U: H_{\tilde{a}} \rightarrow L^2(\langle M, K \rangle)$ the linear map densely defined by
\[ U(x\tilde{a}y\psi^{\frac{1}{2}})=xe_{K}y\hat{\varphi}^{\frac{1}{2}} \]
for all $x,y \in M$ which are $\varphi$-analytic. We claim that $U$ extends to a unitary map which is a $M$-$M$-bimodule isomorphism. In fact $U$ clearly commutes with the left action, and it also commutes with the right action because for $z \in M$ analytic we have
\[ U(x\tilde{a}y\psi^{\frac{1}{2}}z)=U(x\tilde{a}yz'\psi^{\frac{1}{2}})=xe_{K}yz'\hat{\varphi}^{\frac{1}{2}}=xe_{K}y\hat{\varphi}^{\frac{1}{2}}z= U(x\tilde{a}y\psi^{\frac{1}{2}})z. \]
where $z' = \sigma^\varphi_{\frac{-i}{2}}( z)=\sigma^\psi_{\frac{-i}{2}}( z)=\sigma^{\hat{\varphi}}_{\frac{-i}{2}}( z)$ (Note that the modular flows of $\psi$ and $\hat{\varphi}$ coincide on $M$ with the modular flow of $\varphi$). So it only remains to check that $U$ defines indeed a unitary, i.e.\ that
\[  \psi(y_1^*\tilde{a}^*x_1^*x_2\tilde{a}y_2)=\hat{\varphi}(y_1^*e_{K}x_1^*x_2e_{K}y_2) \]
for all $x_1,x_2,y_1,y_2 \in M$ which are $\varphi$-analytic. Once again, since the modular flow of $\hat{\varphi}$ and $\psi$ coincide on $M$, we can pass $y_1^*$ to the other side 
\[  \psi(y_1^*\tilde{a}^*x_1^*x_2\tilde{a}y_2)=\psi(\tilde{a}^*x_1^*x_2\tilde{a}y_2\sigma^\varphi_{-i}(y_1^*)) \]
\[  \hat{\varphi}(y_1^*e_{K}x_1^*x_2e_{K}y_2)=\hat{\varphi}(e_{K}x_1^*x_2e_{K}y_2\sigma^\varphi_{-i}(y_1^*)) \]
and so we just need to check that
\[ \forall x,y \in M, \psi(\tilde{a}^*x\tilde{a}y)=\hat{\varphi}(e_{K}xe_{K}y)=\varphi(E_{K}(x)y). \]
In order to prove this, we can suppose, by density, that $x$ and $y$ are of the form
\[ x=(\otimes_{g}x_g)u_{\gamma} \] 
\[ y=(\otimes_{g}y_g)u_{\delta} \]
with $\gamma, \delta \in \Gamma$, $x_g,y_g \in A_0$ for all $g$ and $x_g=y_g=1$ except for finitely many $g$. We also have $\tilde{a}=\otimes_g \tilde{a}_g$ with $\tilde{a}_g \in \widetilde{\mathcal{A}}_0$ for finitely many (not zero) $g$ and $\tilde{a}_g=1$ otherwise. Recall that $\widetilde{\mathcal{A}}_0$ is our orthonormal base.

Now we compute $\psi(\tilde{a}^*x\tilde{a}y)$. First, if $\delta \gamma \neq 1$ then $\psi(\tilde{a}^*x\tilde{a}y)=0$. So suppose that $\delta=\gamma^{-1}$. Then we have
\[ \psi(\tilde{a}^*x\tilde{a}y)=\psi(\otimes_g (\tilde{a}_g^*x_g\tilde{a}_{\gamma^{-1}g}y_{\gamma^{-1}g}))=\prod_{g \in \Gamma} \psi_0(\tilde{a}_g^*x_g\tilde{a}_{\gamma^{-1}g}y_{\gamma^{-1}g}). \]
For this product to be non-zero, we must have $\tilde{a}_g=\tilde{a}_{\gamma^{-1}g}$ for all $g$. This means that $\sigma_\gamma(\tilde{a})=\tilde{a}$, i.e.\ $\gamma \in S$. 
In this case, the usual computation of free probability gives
\[ \psi(\tilde{a}^*x\tilde{a}y)=\prod_{g \in F} \varphi_0(x_g)\varphi_0(y_{\gamma^{-1}g}) \prod_{g \in \Gamma \setminus F} \varphi_0(x_g y_{\gamma^{-1}g}). \]
So we have shown that $\psi(\tilde{a}^*x\tilde{a}y)$ is given by the formula above when $\delta=\gamma^{-1} \in S$ and is equal to $0$ otherwise.

Now, in order to compute $\varphi(E_{K}(x)y)$, we just need to check the formula
\[ E_{K}(x)=E_{K}((\otimes_g x_g)u_{\gamma})=1_{S}(\gamma)\prod_{g \in F} \varphi_0(x_g) ( \otimes_{g \in \Gamma \setminus F} x_g )  u_\gamma \] 
and we conclude that the equality
\[ \psi(\tilde{a}^*x\tilde{a}y)=\varphi(E_{K}(x)y) \]
is true in all cases.

Hence, we have shown that there is an isomorphism of $M$-$M$-bimodules 
\[ H_\pi=H_{\tilde{a}} \simeq L^2(\langle M, K \rangle). \]
Finally, since $K=A_0^{\ovt \Gamma \setminus F} \rtimes S$ is the crossed product of an amenable von Neumann algebra by a finite group $S$ then $K$ is also amenable. Therefore, its commutant $\langle M, K \rangle$ is also amenable. In particular, this implies that $H_\pi$ is weakly contained in $L^2(M) \ovt L^2(M)$ as an $M$-$M$-bimodule. Since this is true for all $\pi \in \Omega$, we conclude that the $M$-$M$-bimodule
\[ L^2(\widetilde{M}) \ominus L^2(M) = \bigoplus_{\pi \in \Omega} H_\pi \] 
is weakly contained in $L^2(M) \ovt L^2(M)$ as we wanted.
\end{proof}

\begin{proof}[Proof of Theorem \ref{solid}] We first show that $M$ is solid relatively to $\mathcal{L}(\Gamma)$. Suppose we have a properly non-amenable subalgebra with expectation $Q \subset 1_QM1_Q$ such that $Z=\mathcal{Z}(Q)$ is diffuse. We have to show that $Q \prec_M \mathcal{L}(\Gamma)$. Since $\mathcal{L}(\Gamma) \subset M$ is absorbing and $Q \subset Z' \cap 1_QM1_Q$, then using Lemma \ref{abs_com} and Proposition \ref{sub_embed}, we see that it is enough to show that $Z \prec_M \mathcal{L}(\Gamma)$. By Theorem \ref{def_rigidity} we know that $Z \prec_{\widetilde{M}} \theta_1(M)$. Hence by Lemma \ref{alternative} we must have $Z \prec_M \mathcal{L}(\Gamma)$ or $Z \prec_M \ovt_F A_0$ for some finite subset $F \subset \Gamma$. So we just need to show that the case where $Z \prec_M \ovt_F A_0$ leads to a contradiction. Take a subalgebra with expectation $C \subset 1_C \left( \ovt_F A_0 \right) 1_C$ and a non-zero projection $p \in Z$ such that $Zp \sim_M C$. Note that $Z$ and $C$ are abelian so, by Remark \ref{semi-conj}, we know that $Z' \cap 1_QM1_Q \prec_M C' \cap 1_CM1_C$. Hence, by Proposition \ref{sub_embed}, we get $Q \prec_M C' \cap 1_CM1_C$. Recall that, by assumption, $Q$ is properly non-amenable. Therefore, we just need to show that $C' \cap 1_CM1_C$ is amenable in order to get a contradiction. Let $x \in C' \cap 1_C M 1_C$. We claim that $x_g=E_{A}(xu_g^*)=0$ for $g \notin FF^{-1}$. In fact, since $C$ is abelian and diffuse (because $Z$ is abelian and diffuse), there exists a net of unitaries $u_i \in \mathcal{U}(C)$ which tends weakly to $0$. 
Take $g \notin FF^{-1}$. Then we have $\sigma_g(u_i) \in \ovt_{\Gamma \setminus F} A_0$. Hence
\[\forall a,b\in A, \; E_{\ovt_F A_0}(a\sigma_g(u_i)b) \rightarrow 0 \]
in the $*$-strong topology. Since $u_i x_g = x_g \sigma_g(u_i)$ we get
\[ u_iE_{\ovt_F A_0}(x_gx_g^*)= E_{\ovt_F A_0}(x_g\sigma_g(u_i)x_g^*) \rightarrow 0 \] 
in the $*$-strong topology. Thus $x_g=0$. Therefore we have shown that
\[ C' \cap 1_CM1_C \subset \sum_{g \in FF^{-1}} Au_g \]
We will show that this implies that $C' \cap 1_CM1_C$ is amenable. Let $D=C' \cap 1_CM1_C \oplus (1-1_C)\C$. Let $E_D: M \rightarrow D$ be a faithful normal conditional expectation. Since $A$ is amenable, there is a conditional expectation $\Psi: B(L^2(A)) \rightarrow A$. Define a map $\Phi: B(L^2(M)) \rightarrow D$ by
\[ \Phi(T) = \sum_{g \in FF^{-1}} E_D(\Psi(e_ATu_g^*e_A)u_g) \]
where $e_A: L^2(M) \rightarrow L^2(A)$ is the Jones projection. Since $D \subset \sum_{g \in FF^{-1}} Au_g$, a little computation shows that $\Phi(x)=x$ for all $x \in D$. Moreover $\Phi$ is completely bounded (compositions and finite sums of completely bounded maps are still completely bounded). Therefore, using \cite[Corollaire 5]{Pisier}, we have that $D$ is amenable which means that $C' \cap 1_C M 1_C$ is amenable as we wanted.

For the second part of the theorem we can apply Proposition \ref{relative_absolute} and Proposition \ref{prime_full} to get the desired conclusion. Note that when $\Gamma$ is non-amenable and $A_0 \neq \mathbb{C}$ then $M$ is a non-amenable factor and $M \nprec_M \mathcal{L}(\Gamma)$. Indeed, since $A_0 \neq \C$ then $A$ is diffuse and we can find a diffuse abelian subalgebra with expectation $B \subset A$. Let $u_i \in B$ a net of unitaries wich tends weakly to $0$. Then we have $E_{\mathcal{L}(\Gamma)}(x u_i y) \rightarrow 0$ in the $*$-strong topology for all $x,y \in M$ (by density, it suffices to check it for $x,y$ of the form $a u_g$ with $g \in \Gamma$ and $a \in A$). Therefore, by Theorem \ref{criterion_intertwine}, we know that $B \nprec_M \mathcal{L}(\Gamma)$. By Proposition \ref{sub_embed}, we get $M \nprec_M \mathcal{L}(\Gamma)$.

\end{proof}
\begin{rem} Note that the separability and countability assumptions in Theorem \ref{solid} are only needed for the fullness property in Proposition \ref{prime_full}. The proof of the relative solidity in itself does not require any separability assumption. 
\end{rem}

\begin{proof}[Proof of Corollary \ref{invariants}]
See (\cite{ConnesAlmostPeriodic}, Proposition 3.9) and the constructions in (\cite{ConnesAlmostPeriodic}, Corollary 4.4) for the first part and (\cite{ConnesAlmostPeriodic}, Theorem 5.2) for the second part. In both cases, the examples are obtained by taking Bernoulli crossed products $M=(A_0,\varphi_0) \rtimes \mathbb{F}_2$ where $\mathbb{F}_2$ is the free group on $2$ generators and $A_0$ is some non-trivial amenable algebra with separable predual. Since $\mathcal{L}(\mathbb{F}_2)$ is solid and non-amenable then by Theorem \ref{solid} we know that $M$ is solid and non-amenable. If $B_0$ is a Cartan subalgebra of $A_0$, then it is not hard to check (using the fact that the Bernoulli action is properly outer) that $\ovt_{\mathbb{F}_2} B_0$ is again a Cartan subalgebra of $M$. 
\end{proof}

Before the proof of the last corollary, we refer to \cite{FeldMooreII} for the construction of the von Neumann algebra $\mathcal{L}(\mathcal{R})$ of a non-singular equivalence relation $\mathcal{R}$ and its properties. 

\begin{proof}[Proof of Corollary \ref{equivalence}]
We can suppose that $X_0$ is not a point and that $\Gamma$ is infinite (otherwise the result is obvious). Let $X=(X_0,\mu_0)^\Gamma$. Let $A_0=\mathcal{L}(\mathcal{R}_0)$ and let $\varphi_0$ be the faithful normal state on $A_0$ induced by $\mu_0$. Let $B_0=L^\infty(X_0) \subset A_0$ be the canonical Cartan subalgebra. Since $A_0 \neq \mathbb{C}$ and $\Gamma$ is infinite, the Bernoulli action of $\Gamma$ on $(A_0,\varphi_0)^{\ovt \Gamma}$ is properly outer. Using this, it is not hard to check that $(B_0,\varphi_0)^{\ovt \Gamma} \subset M=(A_0,\varphi_0)^{\ovt \Gamma} \rtimes \Gamma$ is again a Cartan subalgebra and that we can identify canonically the Cartan pair $(M,(B_0,\varphi_0)^{\ovt \Gamma})$ with the Cartan pair $(\mathcal{L}(\mathcal{R}),L^\infty(X))$. 

Let $\mathcal{S} \subset \mathcal{R}$ be a subequivalence relation. Then with the preceding identification we have that $(B_0,\varphi_0)^{\ovt \Gamma} \subset \mathcal{L}(\mathcal{S}) \subset M$ with expectations. Since the subalgebra $(B_0,\varphi_0)^{\ovt \Gamma} \subset (A_0,\varphi_0)^{\ovt \Gamma}$ is diffuse, it is easy to check that $(B_0,\varphi_0)^{\ovt \Gamma} \nprec_M \mathcal{L}(\Gamma)$. Therefore $\mathcal{L}(\mathcal{S}) \nprec_M \mathcal{L}(\Gamma)$. Hence, Theorem \ref{solid} applies and we get a sequence of projections $z_n \in \mathcal{Z}(\mathcal{L}(\mathcal{S}))$ with $\sum_n z_n=1$ such that $\mathcal{L}(\mathcal{S})z_0$ is amenable and $\mathcal{L}(\mathcal{S})z_n$ is a full prime factor for all $n \geq 1$. By identifying the projections $z_n$ with $\mathcal{S}$-invariant measurable subsets $Z_n \subset X$ we get the desired conclusion since fullness and primeness of $\mathcal{L}(\mathcal{S}_{|Z_n})=\mathcal{L}(\mathcal{S})z_n$ imply easily strong ergodicity and primeness of $\mathcal{S}_{|Z_n}$.
\end{proof}

\bibliography{database}

\end{document}